\definecolor{ufogreen}{rgb}{0.24, 0.82, 0.44}
\begin{document}


\newtheorem{theorem}{Théorème}[section]
\newtheorem{theore}{Théorème}
\newtheorem{definition}[theorem]{Définition}
\newtheorem{proposition}[theorem]{Proposition}
\newtheorem{corollary}[theorem]{Corollaire}
\newtheorem*{con}{Conjecture}
\newtheorem*{remark}{Remarque}
\newtheorem*{remarks}{Remarques}
\newtheorem*{pro}{Problème}
\newtheorem*{examples}{Exemples}
\newtheorem*{example}{Exemple}
\newtheorem{lemma}[theorem]{Lemme}


\title{Éléments de comptage sur les générateurs du groupe modulaire et les $\lambda$-quiddités}

\author{Flavien Mabilat}

\date{}

\keywords{modular group, generators, $3d$-dissection, quiddity}

\address{
}
\def\emailaddrname{{\itshape Courriel}}
\email{flavien.mabilat@univ-reims.fr}

\maketitle

\selectlanguage{french}

\begin{abstract}
L'objectif de cet article est de compter les $n$-uplets d'éléments $(a_{1},\ldots,a_{n})$ d'entiers strictement positifs solutions de l'équation $\begin{pmatrix}
   a_{n} & -1 \\[4pt]
    1    & 0 
   \end{pmatrix}
\begin{pmatrix}
   a_{n-1} & -1 \\[4pt]
    1    & 0 
   \end{pmatrix}
   \cdots
   \begin{pmatrix}
   a_{1} & -1 \\[4pt]
    1    & 0 
    \end{pmatrix}=\pm M$ lorsque $M$ est égal aux générateurs du groupe modulaire $S=\begin{pmatrix}
    0 & -1 \\[4pt]
    1 & 0 
   \end{pmatrix}$ et $T=\begin{pmatrix}
   1 & 1 \\[4pt]
   0 & 1 
    \end{pmatrix}$. Pour faire cela, on va s'intéresser aux $\lambda$-quiddités, qui sont les solutions de l'équation lorsque $M=Id$ (liées aux frises de Coxeter), dont la dernière composante est fixée.
\\
\end{abstract}

\selectlanguage{english}
\begin{abstract}
The aim of this article is to count the $n$-tuples of positive integers $(a_{1},\ldots,a_{n})$ solutions of the equation $\begin{pmatrix}
   a_{n} & -1 \\[4pt]
    1    & 0 
   \end{pmatrix}
\begin{pmatrix}
   a_{n-1} & -1 \\[4pt]
    1    & 0 
   \end{pmatrix}
   \cdots
   \begin{pmatrix}
   a_{1} & -1 \\[4pt]
    1    & 0 
    \end{pmatrix}=\pm M$ when $M$ is equal to the generators of the modular group $S=\begin{pmatrix}
   0 & -1 \\[4pt]
    1    & 0 
   \end{pmatrix}$ and $T=\begin{pmatrix}
   1 & 1 \\[4pt]
    0    & 1 
    \end{pmatrix}$. To count these elements, we will study the $\lambda$-quiddities, which are the solutions of the equation in the case $M=Id$ (related to Coxeter's friezes), whose last component is fixed.
\\
\end{abstract}

\selectlanguage{french}

\thispagestyle{empty}

\noindent {\bf Mots clés:} groupe modulaire; générateurs; $3d$-dissection; quiddité.   
\\
\begin{flushright}
\og \textit{L'esprit n'avance que s'il a la patience de tourner en rond, c'est-à-dire d'approfondir.} \fg
\\Emil Cioran, \textit{Le mauvais démiurge.}
\end{flushright}

\section{Introduction}

Le groupe modulaire \[SL_{2}(\mathbb{Z})=
\left\{
\begin{pmatrix}
a & b \\
c & d
   \end{pmatrix}
 \;\vert\;a,b,c,d \in \mathbb{Z},\;
 ad-bc=1
\right\}\] a été l'objet de très nombreux travaux et compte parmi les objets mathématiques les plus étudiés. Ceci s'explique par ses multiples et fructueuses applications ainsi que par les remarquables propriétés que lui et ses sous-groupes vérifient. Parmi celles-ci, l'une des plus utiles, et des plus anciennes, est l'existence de parties génératrices à seulement deux éléments. Il existe de nombreux couples de matrices qui peuvent jouer ce rôle, mais on se concentrera avant tout dans ce texte sur les deux éléments suivants :
\[T=\begin{pmatrix}
 1 & 1 \\[2pt]
    0    & 1 
   \end{pmatrix}, S=\begin{pmatrix}
   0 & -1 \\[2pt]
    1    & 0 
   \end{pmatrix}.
 \] 

Ce choix est particulièrement intéressant car il permet d'obtenir une expression remarquable des matrices du groupe modulaire. En effet, on peut montrer (voir par exemple l'introduction de \cite{M1}) que pour tout $A \in SL_{2}(\mathbb{Z})$, il existe un entier strictement positif $n$ et des entiers strictement positifs $a_{1},\ldots,a_{n}$ tels que : \[A=T^{a_{n}}ST^{a_{n-1}}S\cdots T^{a_{1}}S=\begin{pmatrix}
   a_{n} & -1 \\[4pt]
    1    & 0 
   \end{pmatrix}
\begin{pmatrix}
   a_{n-1} & -1 \\[4pt]
    1    & 0 
   \end{pmatrix}
   \cdots
   \begin{pmatrix}
   a_{1} & -1 \\[4pt]
    1    & 0 
    \end{pmatrix}:=M_{n}(a_{1},\ldots,a_{n}).\]
		
\noindent Toutefois, il convient de noter que cette façon d'exprimer les matrices du groupe modulaire n'est pas unique (même s'il est possible en rajoutant la condition $n$ minimal d'avoir une forme d'unicité, voir \cite{MO} section 6). En effet, on dispose, par exemple, des deux écritures de $-Id$ donnée ci-dessous : 
\[-Id=M_{3}(1,1,1)=M_{4}(1,2,1,2).\]

\medskip

Par ailleurs, les matrices $M_{n}(a_{1},\ldots,a_{n})$ apparaissent dans de nombreux autres domaines. Elles jouent notamment un rôle décisif dans la construction des frises de Coxeter et interviennent également dans un certain nombre de problèmes, tels que l'écriture matricielle des équations de Sturm-Liouville discrètes ou l'expression des réduites des fractions continues de Hirzebruch-Jung (voir par exemple l'introduction de \cite{O}). Dans ce dernier cas, on peut même considérer que les matrices $M_{n}(a_{1},\ldots,a_{n})$ offrent une généralisation judicieuse des fractions continues négatives.
\\
\\ \indent Ces nombreuses applications et l'absence d'unicité évoquée plus haut incitent naturellement à chercher les différentes écritures associées à une matrice, ou un ensemble de matrices, fixé. Plus précisément, on va s'intéresser ici pour une matrice $M$ donnée à l'équation ci-dessous :

\begin{equation}
\label{a}
\tag{$E_{M}$}
M_{n}(a_1,\ldots,a_n)=\pm M.
\end{equation} 

\smallskip

\noindent On dira, en particulier, qu'une solution de \eqref{a} est de taille $n$ si cette solution est un $n$-uplet d'éléments de $\mathbb{N}^{*}$. Lorsque $M=Id$, l'équation \eqref{a} est appelée équation de Conway-Coxeter et ses solutions, qui sont étroitement liées à la construction des frises de Coxeter (voir par exemple \cite{CH}), sont désignées par le terme $\lambda$-quiddité\footnote{En général, pour éviter les confusions, on précise l'ensemble sur lequel on travaille. Ici, on restera exclusivement sur les entiers strictement positifs. Aussi, afin de ne pas alourdir inutilement ce qui va suivre, on omettra systématiquement cette précision.}. Ces dernières ont été étudié par V. Ovsienko qui a fourni une construction récursive et une description combinatoire de ces solutions utilisant des découpages de polygones (voir \cite{O} et la section suivante), généralisant ainsi un théorème antérieur dû à Conway et Coxeter (voir \cite{CoCo} pour le résultat initial et \cite{MO} Théorème 3.3 pour un énoncé correspondant au point de vue adopté ici). Grâce à celle-ci, C. Conley et V. Ovsienko ont obtenu des formules permettant de compter l'ensemble des $\lambda$-quiddités de taille fixée (voir \cite{CO} et la section suivante). Notons qu'il existe également d'autres formules de comptage qui permettent de connaître le nombre de solutions de $(E_{Id})$ lorsque les $a_{i}$ appartiennent à certains anneaux commutatifs unitaires finis (voir notamment les trois preuves différentes des formules de comptage sur $\mathbb{Z}/p^{n}\mathbb{Z}$ présentes dans \cite{SZ,BC,M4}). Par ailleurs, de nombreux autres éléments sur ces solutions ont également été obtenues (voir par exemple \cite{C,CH,M2,M3}).
\\
\\ \indent Au vu de la définition des matrices $M_{n}(a_{1},\ldots,a_{n})$, il est naturel de chercher des résultats analogues pour les matrices $S$ et $T$ qui sont à l'origine de l'expression particulière des éléments du groupe modulaire sur laquelle on se penche dans ce texte. On dispose déjà d'une construction récursive des solutions de $(E_{S})$ et de $(E_{T})$ et d'une description combinatoire de celles-ci (voir \cite{M1} et la section \ref{des}). L'objectif de cet article est de compter ces solutions, et d'utiliser les méthodes employées pour obtenir le nombre de solutions de \eqref{a} pour d'autres matrices $M$ utilisées parfois comme générateur de $SL_{2}(\mathbb{Z})$. Plus précisément, dans ce qui va suivre, on va reprendre les formules de C. Conley et V. Ovsienko et les méthodes utilisées pour les obtenir afin de pouvoir compter les $\lambda$-quiddités dont la dernière composante est fixée. Ces nouveaux éléments de comptage nous permettront ensuite d'obtenir le nombre de solutions de taille fixée de $(E_{M})$ pour $M=S$ ou $M=T$, ainsi que pour d'autres matrices qui leur sont immédiatement liées telles que $T^{-1}$ ou $TS$.
\\
\\ \indent Dans ce qui va suivre, $n$ est un entier positif et, pour tout réel $x$, $E[x]$ est la partie entière de $x$. De plus, on utilisera les conventions suivantes : ${n \choose k}={n \choose -1}=0$ si $k>n$ et ${-n \choose k}=1$. $Q_{0}:=1$ et, pour $n \geq 1$, $Q_{n}$ désigne le nombre de $\lambda$-quiddités de taille $n+2$. On note $Q:=\sum_{n=0}^{+\infty} Q_{n} X^{n}$ la série génératrice associée. On pose $P_{0}:=1$ et, pour $n \geq 1$, $P_{n}:=\sum_{k=0}^{E\left[n/3\right]} \frac{1}{n-k+1} {n-2k-1 \choose k}{2n-4k \choose n-3k}$. On note $P:=\sum_{n=0}^{+\infty} P_{n} X^{n}$ et $P^{-1}:=\sum_{n=0}^{+\infty} \tilde{P_{n}} X^{n}$ l'inverse multiplicatif de $P$ (la signification combinatoire de $P_{n}$ et de $\tilde{P_{n}}$ sera donnée dans la section suivante). Soient $n \geq 1$ et $k \geq 1$. $V_{k,n}$ représente le nombre de $\lambda$-quiddités de taille $n+2$ dont la dernière composante est égale à $k$. Par ailleurs, on pose $V_{k,0}:=0$ et $V_{k}:=\sum_{n=0}^{+\infty} V_{k,n} X^{n}$. Si $R(X)$ est une série génératrice, on note, pour tout $j \geq 0$, $[X^{j}]R(X)$ le coefficient de $X^{j}$ dans $R(X)$.
\\
\\On va démontrer dans la partie \ref{DCQ} les deux résultats ci-dessous :

\begin{theorem}
\label{10}

Soit $n \geq 1$. On a 
\[\tilde{P_{n}}=-\sum_{j=0}^{n-1} \sum_{k=0}^{E\left[\frac{n-j-1}{2}\right]} (-1)^{j-k} {j \choose k} \sum_{l=0}^{E\left[\frac{n-j-2k-1}{3}\right]} \frac{2j+2}{n+j-2k-l+1} {n-j-2k-2l-2 \choose l}{2n-4k-4l-1 \choose n-j-2k-3l-1}.\]

\end{theorem}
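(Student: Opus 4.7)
The plan is to combine two ingredients: a rational-function expansion of $1/P(X)$ derived from the functional equation satisfied by $P$, together with an explicit Lagrange--Raney formula for the coefficients $[X^{M}]P^{r}$. The triple sum of Theorem~\ref{10} is then produced by substituting the second into the first and matching indices.

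First, I would use the functional equation $(P-1)(1-X^{3}P^{2})=XP^{2}$ (equivalently $P=1+\sum_{k\ge 0}X^{3k+1}P^{2k+2}$), which is the equation underlying the $3d$-dissection interpretation of $P$ announced in the next section. Dividing by $P$ and simplifying using $P(1-X^{3}P^{2})=1+XP^{2}-X^{3}P^{2}$ gives
\[
\frac{1}{P}=1-\frac{XP^{2}}{1+XP^{2}-X^{3}P^{2}}=1-\sum_{j\ge 0}X^{j+1}(X^{2}-1)^{j}\,P^{2j+2},
\]
the geometric expansion being valid in $\mathbb{Z}[[X]]$ since the denominator has constant term $1$. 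Taking $[X^{n}]$ for $n\ge 1$ and noting that the $j$-th summand has $X$-valuation at least $j+1$, only indices $j\le n-1$ contribute. Expanding $(X^{2}-1)^{j}=\sum_{k}(-1)^{j-k}\binom{j}{k}X^{2k}$ then yields
\[
\tilde{P}_{n}=-\sum_{j=0}^{n-1}\sum_{k\ge 0}(-1)^{j-k}\binom{j}{k}\,[X^{\,n-j-2k-1}]\,P^{2j+2}.
\]

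Next, following the method of Conley--Ovsienko, I would interpret $P^{r}$ as the generating series of ordered forests of $r$ rooted plane trees whose internal nodes of type $k\ge 0$ have $2k+2$ children and contribute the weight $X^{3k+1}$. The cycle lemma (equivalently, forest Lagrange--Bürmann inversion) then produces the identity
\[
[X^{M}]P^{r}=\sum_{l\ge 0}\frac{r}{M+r-l}\binom{M-2l-1}{l}\binom{2M+r-4l-1}{M-3l},
\]
whose $r=1$ specialization recovers the closed formula for $P_{n}$ stated in the introduction.

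It remains to substitute the latter identity into the former with $M=n-j-2k-1$ and $r=2j+2$. One checks the correspondences
\[M+r-l=n+j-2k-l+1,\qquad M-2l-1=n-j-2k-2l-2,\]
\[2M+r-4l-1=2n-4k-4l-1,\qquad M-3l=n-j-2k-3l-1,\]
so that the two displays combine into exactly the expression of Theorem~\ref{10}. The bounds $k\le E[(n-j-1)/2]$ and $l\le E[(n-j-2k-1)/3]$ fall out automatically from the vanishing of the binomial coefficients under the convention $\binom{n}{k}=0$ for $k>n$. The main obstacle will be the careful derivation of the Lagrange--Raney identity for $[X^{M}]P^{r}$; once this is in place, the rational-function manipulation of $1/P$ and the subsequent index-matching are routine bookkeeping.
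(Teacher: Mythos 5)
Your proposal is correct and follows essentially the same route as the paper: invert the functional equation $P=1+\frac{XP^{2}}{1-X^{3}P^{2}}$ to get $P^{-1}=1-\sum_{j\ge 0}X^{j+1}(X^{2}-1)^{j}P^{2j+2}$, expand $(X^{2}-1)^{j}$ by the binomial theorem, and substitute the coefficient formula $[X^{M}]P^{r}=\sum_{l}\frac{r}{M+r-l}\binom{M-2l-1}{l}\binom{2M+r-4l-1}{M-3l}$ with $M=n-j-2k-1$, $r=2j+2$. The only difference is that the paper simply cites that coefficient formula as a known lemma of Conley--Ovsienko (Proposition~\ref{26}) rather than rederiving it by Lagrange--Raney inversion, so the step you flag as the main obstacle is already available off the shelf.
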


\begin{theorem}
\label{11}

Soit $k \geq 1$. On dispose des deux formules ci-dessous :
\[V_{k}=[(Q-1)P^{-1}](1-P^{-1})^{k-1},\]
\[V_{1}=(Q-1)P^{-1}=\sum_{n=1}^{+\infty} \left(\sum_{j=0}^{E\left[\frac{n-1}{3}\right]} \sum_{k=0}^{E\left[\frac{n-3j-1}{3}\right]} \frac{3j+1}{n-k} {n-3j-1-2k-1 \choose k} {2n-4k-3j-2 \choose n-3k-3j-1}\right)X^{n}.\]

\end{theorem}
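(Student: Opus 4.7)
Le plan de la preuve se décompose naturellement en deux parties : d'abord établir la formule pour les séries génératrices $V_k = [(Q-1)P^{-1}](1-P^{-1})^{k-1}$, puis en déduire par un calcul explicite la formule développée pour $V_1$.

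Pour la première partie, je procéderais par récurrence sur $k$, en la ramenant aux deux identités de base $V_1 \cdot P = Q - 1$ et $V_k \cdot P = V_{k-1} \cdot (P - 1)$ pour $k \geq 2$. La première identité exprime combinatoirement qu'une $\lambda$-quiddité de taille au moins $3$ se décompose de manière unique en une $\lambda$-quiddité se terminant par $1$ accompagnée d'une structure énumérée par la suite $(P_n)$. La démarche naturelle consiste à exploiter l'interprétation des $\lambda$-quiddités via les découpages de polygones rappelée dans la section précédente, en isolant canoniquement la portion du découpage incidente au sommet portant la dernière composante. Pour la seconde identité, une décomposition analogue appliquée à la dernière composante transforme une $\lambda$-quiddité se terminant par $k$ en une se terminant par $k-1$ via la suppression d'un motif énuméré par $P$ ; les $\lambda$-quiddités se terminant par $k-1$ qui ne proviennent d'aucune se terminant par $k$ sont alors en bijection avec les éléments énumérés par la convolution $V_{k-1} \cdot P^{-1}$, d'où l'identité $V_{k-1} = V_k + V_{k-1}P^{-1}$.

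Pour la seconde partie, une fois la formule $V_1 = (Q-1)P^{-1}$ établie, l'extraction du coefficient de $X^n$ donne
\[V_{1,n} = \sum_{m=1}^{n} Q_m \tilde P_{n-m}.\]
En substituant la formule de Conley-Ovsienko pour $Q_m$ (rappelée dans la section précédente) et l'expression obtenue au Théorème \ref{10} pour $\tilde P_{n-m}$, on obtient une somme quadruple, dont la réduction à la double somme de l'énoncé s'effectue par réindexation soigneuse et applications successives d'identités binomiales standard (notamment Chu--Vandermonde).

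La principale difficulté réside dans la première partie, et plus précisément dans la mise en place explicite de la bijection canonique aboutissant à $V_1 \cdot P = Q - 1$ : il faut décrire sans ambiguïté le point d'attache et le type combinatoire précis du motif de découpage pour que le comptage par $P$ coïncide. La version affinée pour $V_k$ suit ensuite un schéma parallèle. En comparaison, la seconde partie est algébriquement fastidieuse mais fondamentalement routinière une fois les identités binomiales requises identifiées.
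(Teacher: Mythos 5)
The first half of your plan coincides in substance with the paper's argument: the identities $Q-1=V_1P$ and the recursion in $k$ are obtained there by cutting the canonical representative of each quiddity (the unique $3d$-dissection ouverte maximale of Théorème \ref{23}) along the side joining $n+2$ to the furthest vertex of the sous-polygone de base, which yields exactly the convolutions you describe. Two caveats on your version, though. First, the decomposition must be set up at the level of dissections and not of quiddities: $P_n$ does not count $\lambda$-quiddités of any simple kind but quiddities of $3d$-dissections ouvertes maximales à base ouverte, so the unicity of your splitting rests entirely on Théorème \ref{23} and Proposition \ref{24}; you acknowledge this is the hard point but leave it open. Second, your justification of the step from $k-1$ to $k$ via \og les éléments énumérés par la convolution $V_{k-1}P^{-1}$ \fg{} is not combinatorially meaningful as stated, since $P^{-1}$ has negative coefficients; the clean formulation is that $U_1:=1-P^{-1}$ itself enumerates the open-base dissections whose last vertex meets a single sub-polygon, and $V_k=V_{k-1}U_1$. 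This is algebraically your $V_kP=V_{k-1}(P-1)$, so the discrepancy is one of presentation, not of substance.

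The genuine gap is in the explicit formula for $V_1$. Writing $V_{1,n}=\sum_{m}Q_m\tilde{P}_{n-m}$ and substituting the closed forms of $Q_m$ (a double sum) and of $\tilde{P}_{n-m}$ (the triple sum of Théorème \ref{10}) produces a five-fold sum, and there is no reason to believe that Chu--Vandermonde-type manipulations collapse it to the stated double sum; you give no indication of how this reduction would proceed, and calling it routine is not credible. The paper bypasses the convolution entirely: the functional equations of Théorème \ref{25} give $Q-1=\sum_{j\geq 0}X^{3j+1}P^{3j+2}$, hence $(Q-1)P^{-1}=\sum_{j\geq 0}X^{3j+1}P^{3j+1}$, and each coefficient is then read off by a single application of Proposition \ref{26} with $e=3j+1$ --- which is precisely where the factor $\frac{3j+1}{n-k}$ in the stated formula comes from. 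That use of the functional equation is the missing idea; without it your second part does not go through as planned.
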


En utilisant les formules permettant de calculer les coefficients de $P^{-1}$ et les formules de $Q_{n}$, on pourra obtenir les valeurs de $V_{k,n}$. Une fois celle-ci connue, on pourra connaître le nombre de solutions de taille fixée de $(E_{S})$ et de $(E_{T})$ grâce au théorème ci-dessous. Avant de l'énoncer, on pose, pour $n \geq 0$, $\mathcal{S}_{n}$ le nombre de solutions de $(E_{S})$ de taille $n+2$ et $\mathcal{T}_{n}$ le nombre de solutions de $(E_{T})$ de taille $n+2$.

\begin{theorem}
\label{12}

i) $\mathcal{S}_{0}=\mathcal{S}_{1}=\mathcal{S}_{2}=0$ et pour tout $n \geq 3$ on a :
\[\mathcal{S}_{n}=\sum_{d=2}^{n-1} (d-1) V_{d,n-1}.\]
\noindent ii) $\mathcal{T}_{0}=0$ et pour tout $n \geq 1$ on a :
\[\mathcal{T}_{n}=\mathcal{S}_{n-1}+Q_{n}.\]

\end{theorem}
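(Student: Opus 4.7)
My plan for both parts is to peel off boundary entries of a solution tuple and use the trivial inversion identity
\[
\begin{pmatrix} c & -1 \\ 1 & 0 \end{pmatrix}^{-1}=\begin{pmatrix} 0 & 1 \\ -1 & c \end{pmatrix},
\]
combined with the characterisation (obtained by extracting the last factor of $(E_{Id})$) that $(a_1,\ldots,a_n,d)$ is a $\lambda$-quiddité iff $M_n(a_1,\ldots,a_n)=\pm\begin{pmatrix} 0 & 1 \\ -1 & d \end{pmatrix}$.

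For part i), the cases $n\in\{0,1,2\}$ are dispatched by inspection: for $M_n=\pm S$ with $n\in\{2,3,4\}$, one of the four entries must vanish while being a visibly nonzero expression in the positive integers $a_i$. For $n\geq 3$, I propose the bijection
\[
\Phi\bigl((a_1,\ldots,a_n,d),i\bigr):=(i,a_1,\ldots,a_n,d-i)
\]
from pairs (size-$(n+1)$ $\lambda$-quiddité with last entry $d\geq 2$, integer $1\leq i\leq d-1$) to size-$(n+2)$ tuples of positive integers. The factorisation
\[
M_{n+2}(i,a_1,\ldots,a_n,d-i)=\begin{pmatrix} d-i & -1 \\ 1 & 0 \end{pmatrix}M_n(a_1,\ldots,a_n)\begin{pmatrix} i & -1 \\ 1 & 0 \end{pmatrix},
\]
together with $M_n(a_1,\ldots,a_n)=\pm\begin{pmatrix} 0 & 1 \\ -1 & d \end{pmatrix}$, collapses the middle two factors to $\begin{pmatrix} 1 & 0 \\ d-i & 1 \end{pmatrix}$ and leaves $\pm S$. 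The inverse $(b_1,\ldots,b_{n+2})\mapsto\bigl((b_2,\ldots,b_{n+1},b_1+b_{n+2}),b_1\bigr)$ is constructed symmetrically: the mirror computation shows $M_n(b_2,\ldots,b_{n+1})=\pm\begin{pmatrix} 0 & 1 \\ -1 & b_1+b_{n+2} \end{pmatrix}$, so the extended tuple is a $\lambda$-quiddité with $d=b_1+b_{n+2}\geq 2$ and $i=b_1\in\{1,\ldots,d-1\}$. The maps are mutual inverses; grouping pairs by $d$ produces the factor $d-1$, and the bound $d\leq n-1$ is the classical estimate on the entries of a size-$(n+1)$ $\lambda$-quiddité.

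Part ii) runs along the same lines with $T$ replacing $S$. After verifying $\mathcal{T}_0=0$ by inspecting $M_2$, I partition, for $n\geq 1$, the solutions $(b_1,\ldots,b_{n+2})$ of $(E_T)$ by the last component $c:=b_{n+2}$. The same inversion gives
\[
M_{n+1}(b_1,\ldots,b_{n+1})=\begin{pmatrix} c & -1 \\ 1 & 0 \end{pmatrix}^{-1}(\pm T)=\pm\begin{pmatrix} 0 & 1 \\ -1 & c-1 \end{pmatrix}.
\]
For $c=1$ the right-hand side is $\mp S$, so these solutions correspond bijectively to size-$(n+1)$ solutions of $(E_S)$, contributing $\mathcal{S}_{n-1}$. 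For $c\geq 2$ the identity says $(b_1,\ldots,b_{n+1},c-1)$ is a $\lambda$-quiddité of size $n+2$; as $c-1$ ranges over $\mathbb{N}^{*}$ we recover every such $\lambda$-quiddité exactly once, contributing $Q_n$. Adding yields $\mathcal{T}_n=\mathcal{S}_{n-1}+Q_n$.

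The substantive step is guessing the shape of $\Phi$ in part i); once written down, every verification reduces to multiplying a handful of $2\times 2$ matrices. The heuristic is that $S=M_1(0)$ would give a size-$1$ solution of $(E_S)$ were the entry $0$ admissible, so to realise $\pm S$ with strictly positive entries one must distribute a forbidden zero as a positive pair $(i,d-i)$ between the two ends of the tuple, which is precisely the move converting a $\lambda$-quiddité ending in $d$ into a size-$(n+2)$ solution of $(E_S)$.
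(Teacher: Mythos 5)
Your proposal is correct and follows essentially the same route as the paper: your matrix-inversion computation establishes exactly the paper's key equivalence that $(b_1,\ldots,b_{n+2})$ solves $(E_S)$ if and only if $(b_2,\ldots,b_{n+1},b_1+b_{n+2})$ is a $\lambda$-quiddité (the paper gets this via circular permutation and the identity $M_3(a,0,b)=-M_1(a+b)$ rather than by inverting the end factors, but it is the same reduction), and the subsequent partition by $d=b_1+b_{n+2}$ and by $b_1$, as well as the split of $(E_T)$-solutions according to whether the last entry is $1$ or at least $2$, coincide with the paper's argument.
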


Ce résultat sera prouvé dans la section \ref{SetT}. Ensuite, en réutilisant les méthodes employées pour démontrer ce théorème, on comptera, dans la sous-partie \ref{autre}, le nombre de solutions de l'équation $(E_{M})$ lorsque la matrice $M$ appartient à l'ensemble $\{T^{-1},ST,TS,STST,TSTS\}$.

\section{Description et comptage des $\lambda$-quiddités}
\label{DCQ} 

L'objectif de cette section est de rappeler les différents éléments connus sur les $\lambda$-quiddités qui nous seront utiles pour démontrer, dans la dernière sous-section, le théorème \ref{11}.

\subsection{$\lambda$-quiddités et $3d$-dissections}
\label{dissec}

Les solutions de $(E_{Id})$ possèdent certaines propriétés intéressantes. Elles sont notamment invariantes par permutations circulaires et par retournement (c'est-à-dire que $(a_{1},\ldots,a_{n})$ est solution de $(E_{Id})$ si et seulement si $(a_{n},\ldots,a_{1})$ l'est aussi). On connaît également l'ensemble des $\lambda$-quiddités de taille $n$ pour les petites valeurs de $n$ :
\begin{itemize}
\item $(E_{Id})$ n'a pas de solution de taille 2;
\item $(1,1,1)$ est la seule $\lambda$-quiddité de taille 3;
\item il n'y a que deux solutions de $(E_{Id})$ de taille 4, $(1,2,1,2)$ et $(2,1,2,1)$.
\end{itemize}
\noindent De plus, les $\lambda$-quiddités peuvent être représentées grâce à des découpages de polygones. Pour cela, on a besoin de la notion ci-dessous :

\begin{definition}
\label{21}

i) Une 3$d$-dissection\footnote{Pour des raisons inconnues, C. Conley et V. Ovsienko ont remplacé dans leurs travaux l'appellation \og $3d$-dissection \fg, introduite par V. Ovsienko, par le nom \og dissection $3$-périodique \fg. Ici, on a choisi de conserver la terminologie d'origine.} est un découpage d'un polygone convexe $\mathcal{P}$ par des diagonales ne se croisant qu'aux sommets de $\mathcal{P}$ et tel que chaque sous-polygone résultant de ce découpage possède un nombre de sommets égal à un multiple de $3$.
\\
\\ii) On choisit un sommet que l'on numérote 1 et un sens de rotation et on numérote les autres sommets en suivant ce dernier. On appelle quiddité associée à la 3$d$-dissection du polygone à $n$ sommets $\mathcal{P}$ la séquence $(a_{1},\ldots,a_{n})$ où $a_{i}$ est égal au nombre de sous-polygones utilisant le sommet $i$.

\end{definition}

Celles-ci dont reliées à notre problème via le résultat ci-dessous, qui généralise le théorème de Conway-Coxeter (voir \cite{CoCo}) :

\begin{theorem}[\cite{O}, Théorème 1]
\label{21bis}

Soit $n \geq 3$. Un $n$-uplet d'entiers strictement positifs solution de $(E_{Id})$ est une quiddité d'une 3$d$-dissection d'un polygone convexe à $n$ sommets et réciproquement.

\end{theorem}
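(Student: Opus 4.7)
Le plan consiste à prouver les deux implications séparément, en s'appuyant dans les deux cas sur une factorisation matricielle qui reflète le découpage du polygone le long d'une diagonale de la dissection.

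Pour l'implication \emph{quiddité d'une $3d$-dissection $\Rightarrow$ solution de $(E_{Id})$}, je procéderais par récurrence sur le nombre de sous-polygones de la dissection. Le cas de base correspond à une dissection triviale où $\mathcal{P}$ lui-même est l'unique sous-polygone : il possède alors $3k$ sommets et la quiddité associée est $(1,1,\ldots,1)$ de longueur $3k$. Un calcul direct avec la matrice $A=\begin{pmatrix} 1 & -1 \\ 1 & 0 \end{pmatrix}$ donne $A^{3}=-Id$, d'où $A^{3k}=\pm Id$. Pour l'étape inductive, on choisit une diagonale $\delta$ de la dissection; elle scinde $\mathcal{P}$ en deux polygones plus petits $\mathcal{P}'$ et $\mathcal{P}''$ munis chacun d'une $3d$-dissection à moins de sous-polygones. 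Les deux quiddités induites se déduisent de $(a_{1},\ldots,a_{n})$ en retranchant $1$ aux composantes correspondant aux extrémités de $\delta$ (une sur chaque côté). En appliquant l'hypothèse de récurrence à chacun des morceaux, puis une identité matricielle standard reliant le produit $M_{n}(a_{1},\ldots,a_{n})$ aux deux matrices associées aux quiddités induites (avec un terme de recollement prenant en compte le $+1$ restauré aux extrémités de $\delta$), on obtient l'équation matricielle voulue pour la quiddité complète.

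Pour la réciproque, je procéderais par récurrence forte sur $n$. L'étape combinatoire clef consiste à établir l'existence d'une configuration réductible dans toute solution de taille $n\geq 4$ : soit un indice $i$ avec $a_{i}=1$ (ce qui permet, comme dans le théorème de Conway-Coxeter, de retirer une oreille triangulaire en remplaçant $a_{i-1},1,a_{i+1}$ par $a_{i-1}-1, a_{i+1}-1$), soit une configuration plus longue correspondant au détachement d'un sous-polygone périphérique à $3k$ sommets. Après cette réduction, on obtient une solution de $(E_{Id})$ de taille strictement plus petite à laquelle l'hypothèse de récurrence fournit une $3d$-dissection; on recolle alors le sous-polygone périphérique ôté le long du segment adéquat pour reconstituer une $3d$-dissection de $\mathcal{P}$ dont la quiddité est bien $(a_{1},\ldots,a_{n})$.

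Le principal obstacle réside dans le lemme d'existence de configurations réductibles pour la réciproque. Dans le cas classique des triangulations, on sait que toute $\lambda$-quiddité de taille $\geq 4$ contient au moins un $1$; ici, les sous-polygones pouvant avoir $3,6,9,\ldots$ sommets, les motifs réductibles sont plus variés et il faut démontrer qu'au moins l'un d'entre eux apparaît systématiquement dans chaque solution. Ceci peut se faire par analyse du minimum des $a_{i}$ et des relations matricielles locales satisfaites par les petites composantes consécutives, notamment via la contrainte de trace imposée par l'équation $M_{n}(a_{1},\ldots,a_{n})=\pm Id$, combinée à un argument de plus petit contre-exemple. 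Une fois cette existence acquise, les étapes de découpage et de recollement se ramènent à des manipulations matricielles routinières.
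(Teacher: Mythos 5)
Cet énoncé est le Théorème 1 de \cite{O} : l'article ne le démontre pas, il le cite tel quel, et votre tentative doit donc être jugée sur pièces. Le sens \og quiddité $\Rightarrow$ solution \fg{} est correctement esquissé et correspond à l'argument usuel : récurrence sur le nombre de sous-polygones, cas de base $M_{3k}(1,\ldots,1)=A^{3k}=(-Id)^{k}$, recollement le long d'une diagonale. Une imprécision toutefois : en coupant le long d'une diagonale $\delta$ d'extrémités $u$ et $v$, les quiddités induites en $u$ et $v$ s'obtiennent en répartissant $a_{u}$ (resp. $a_{v}$) entre les deux côtés, et non en retranchant $1$ de part et d'autre ; votre formulation n'est exacte que si l'un des deux morceaux est réduit à un seul sous-polygone (une \og oreille \fg{} du graphe dual, qui est un arbre), ce que l'on peut toujours supposer mais qu'il faut dire.

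La réciproque, en revanche, comporte une lacune réelle que vous signalez vous-même sans la combler : l'existence, dans toute solution positive de taille $n\geq 4$, d'une configuration réductible. L'existence d'une composante égale à $1$ se démontre bien (si tous les $a_{i}$ sont $\geq 2$, les continuants croissent strictement et $M_{n}(a_{1},\ldots,a_{n})$ ne peut valoir $\pm Id$), mais elle ne suffit pas : si ce $1$ est adjacent à un autre $1$, la réduction triangulaire produit un $0$ et sort des entiers strictement positifs. Il faut alors exhiber une plage de $1$ consécutifs de longueur $3k-2$ bordée par des composantes $b,c\geq 2$ et la détacher via l'identité $M_{3k}(b,1,\ldots,1,c)=\pm M_{2}(b-1,c-1)$ (conséquence immédiate de $M_{1}(1)^{3}=-Id$, donc routinière). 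Le vrai travail est de montrer qu'une telle plage existe toujours, autrement dit d'exclure qu'une solution positive n'ait que des plages maximales de $1$ de longueur non congrue à $1$ modulo $3$ (par exemple un motif du type $(2,1,1,2,1,1)$, qui n'est pas une solution, mais encore faut-il le prouver en général). Rien dans votre esquisse n'établit cette contrainte : la \og contrainte de trace \fg{} (la trace de $\pm Id$ vaut $\pm 2$) ne localise aucun motif, et l'invocation d'un plus petit contre-exemple ne remplace pas l'analyse. C'est précisément ce lemme combinatoire qui constitue le point central de la démonstration d'Ovsienko, et il reste entièrement à faire ici.
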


\begin{remark}
{\rm On retrouve dans ce théorème l'invariance par permutations circulaires et par retournement des $\lambda$-quiddités.
}
\end{remark}

\begin{examples}

{\rm On donne ci-dessous quelques exemples de 3$d$-dissections avec leur quiddité.}

$$
\shorthandoff{; :!?}
\xymatrix @!0 @R=0.70cm @C=0.7cm
{
&&3\ar@{-}[rrd]\ar@{-}[lld]\ar@{-}[lddd]\ar@{-}[rddd]&
\\
1\ar@{-}[rdd]&&&& 1\ar@{-}[ldd]\\
\\
&2\ar@{-}[rr]&& 2
}
\qquad
\xymatrix @!0 @R=0.45cm @C=0.45cm
{
&&&1\ar@{-}[rrd]\ar@{-}[lld]
\\
&2\ar@{-}[ldd]\ar@{-}[rrrr]&&&& 2\ar@{-}[rdd]&\\
\\
1\ar@{-}[rdd]&&&&&& 1\ar@{-}[ldd]\\
\\
&1\ar@{-}[rrrr]&&&&1
}
\qquad
\xymatrix @!0 @R=0.32cm @C=0.45cm
 {
&&&2\ar@{-}[dddddddd]\ar@{-}[lld]\ar@{-}[rrd]&
\\
&1\ar@{-}[ldd]&&&& 1\ar@{-}[rdd]\\
\\
1\ar@{-}[dd]&&&&&&1\ar@{-}[dd]\\
\\
1\ar@{-}[rdd]&&&&&&1\ar@{-}[ldd]\\
\\
&1\ar@{-}[rrd]&&&& 1\ar@{-}[lld]\\
&&&2&
}
$$

\end{examples}

On peut compter, avec des méthodes classiques de dénombrement, le nombre $D_{n}$ de $3d$-dissections d'un polygone convexe à $n+2$ sommets. Dans l'article \cite{CO}, on trouve notamment la formule suivante : $D_{n}=\frac{1}{n+1}\sum_{k=0}^{E\left[\frac{n}{3}\right]} {n-2k-1 \choose k}{2n-3k \choose n-3k}$ (corollaire 3.6). Cependant, cette dernière ne nous est d'aucune aide pour compter le nombre de $\lambda$-quiddités de taille fixée (même si on a l'inégalité $Q_{n} \leq D_{n}$). En effet, il n'y a pas de bijection entre les $3d$-dissections et les quiddités des $3d$-dissections, comme l'illustre l'exemple ci-dessous :

$$
\shorthandoff{; :!?}
\xymatrix @!0 @R=0.40cm @C=0.4cm
{
&&1\ar@{-}[lldd] \ar@{-}[rr]&&2\ar@{-}[rrdd]\ar@{-}[lllldd]&
\\
&&&
\\
2\ar@{-}[dd]&&&&&& 1 \ar@{-}[dd]
\\
&&&&
\\
1&&&&&& 2\ar@{-}[lllldd]
\\
&&&
\\
&&2 \ar@{-}[rr]\ar@{-}[lluu] &&1 \ar@{-}[rruu]
}
\qquad
\qquad
\xymatrix @!0 @R=0.40cm @C=0.4cm
{
&&1\ar@{-}[lldd] \ar@{-}[rr]&&2\ar@{-}[rrdd]\ar@{-}[rrdddd]&
\\
&&&
\\
2\ar@{-}[dd]&&&&&& 1 \ar@{-}[dd]
\\
&&&&&&&.
\\
1&&&&&& 2
\\
&&&
\\
&&2 \ar@{-}[rr]\ar@{-}[lluuuu]\ar@{-}[lluu] &&1 \ar@{-}[rruu]
}
$$
\\
\\ \indent Par conséquent, pour effectuer le comptage souhaité, on a besoin de raffiner le concept introduit. Plus précisément, on utilise la relation d'équivalence suivante sur l'ensemble des $3d$-dissections : deux $3d$-dissections sont dites équivalentes si elles ont la même quiddité. Puis, on cherche un représentant particulier de chaque classe qui nous permettra d'utiliser des arguments combinatoires. Notons toutefois qu'il y a une bijection entre les quiddités des triangulations et les triangulations et que le nombre de ces quiddités est donné par les nombres de Catalan (voir \cite{CoCo}). Plus précisément, il y a $C_{n}=\frac{1}{n+1} {2n \choose n}$ quiddités de triangulations de taille $n+2$.

\subsection{Comptage des quiddités des $3d$-dissections}
\label{compt}

Tous les éléments développés dans cette sous-partie proviennent des différentes sections de \cite{CO}.
\\
\\ \indent Soient $n \geq 0$ et $\mathcal{P}$ un polygone convexe à $n+2$ sommets. On choisit un sommet que l'on numérote avec l'indice 1. Puis, on numérote les autres sommets en suivant le sens trigonométrique. On effectue ensuite une $3d$-dissection de $\mathcal{P}$. Celle-ci est constituée d'un certain nombre de sous-polygones dont les côtés peuvent être soit des côtés de $\mathcal{P}$, qu'on appellera alors côtés extérieurs, soit des diagonales, qu'on appellera côtés intérieurs. Les sous-polygones ne contiennent pas de diagonales intérieures. Dans la $3d$-dissection, il y a un seul sous-polygone utilisant le côté dont les sommets sont $1$ et $n+2$. Ce côté est appelé le côté de base de la $3d$-dissection et le sous-polygone utilisant ce côté, noté $\mathcal{P}_{b}$, est désigné comme étant le sous-polygone de base de la décomposition de $\mathcal{P}$. Considérons, s'il existe, un autre sous-polygone de la décomposition, noté $\mathcal{P'}$. Ce dernier possède un unique côté, nécessairement intérieur, tel que si on coupe $\mathcal{P}$ suivant ce côté on obtient un polygone contenant $\mathcal{P'}$ et un polygone contenant $\mathcal{P}_{b}$.

\begin{example}

{\rm Dans le dodécagone ci-dessous, la $3d$-dissection contient sept sous-polygones et le côté $2-9$ est le côté de base de l'hexagone $2-3-4-6-7-9$.}
$$
\shorthandoff{; :!?}
\xymatrix @!0 @R=0.5cm @C=0.7cm
 {
&&1\ar@{-}[rrrrddddd]\ar@{-}[ld]\ar@{-}[rr]&&12 \ar@{-}[rd]\ar@{-}[rrddddd]&
\\
&2\ar@{-}[ldd]\ar@{=}[rrrrrdddd]\ar@{-}[rrrrrdddd]&&&\mathcal{P}_{b}& 11\ar@{-}[rdd]\ar@{-}[rdddd]\\
\\
3\ar@{-}[dd]&&&&&&10\ar@{-}[dd]\\
\\
4\ar@{-}[rdd]\ar@{-}[rrddd]&&&&&&9\ar@{-}[ldd]\\
\\
&5\ar@{-}[rd]\ar@{-}&&&&8 \ar@{-}[ld]\\
&&6\ar@{-}[rr]&&7\ar@{-}[rruuu]&
}
\qquad
$$

\end{example}

Ce vocabulaire étant fixé, on peut maintenant définir les éléments nécessaires à l'obtention d'un représentant utile des classes d'équivalence.

\begin{definition}
\label{22}

Soient $n \geq 0$ et $\mathcal{P}$ un polygone convexe à $n+2$ sommets pour lequel on choisit une $3d$-dissection.
\\
\\i) On attribue au côté de base de la décomposition l'indice $\overline{0} \in \mathbb{Z}/3\mathbb{Z}$. Puis, on parcourt les côtés du sous-polygone de base dans le sens trigonométrique. Pour chaque côté on attribue l'indice du côté précédent augmenté de $\overline{1}$. Ensuite, on effectue la même chose pour les tous sous-polygones dont l'un des côtés a été indexé, en partant de ce côté. En continuant ce processus, on fournit un indice à tous les côtés extérieurs et intérieurs de la $3d$-dissection.
\\
\\ii) On dira que la $3d$-dissection est ouverte maximale si, pour tout sous-polygone $\mathcal{P}' \neq \mathcal{P}_{b}$ de la décomposition, tous les côtés de $\mathcal{P}'$, différents de son côté de base, dont l'indice est le même que celui du côté de base de ce sous-polygone sont des côtés extérieurs.
\\
\\iii) Si la propriété précédente est également vraie pour le sous-polygone de base alors on dira que la $3d$-dissection est ouverte maximale à base ouverte.

\end{definition}

\begin{remark}
{\rm La propriété demandée pour les sous-polygones dans ii) est toujours vérifiée pour les triangles.
}
\end{remark}

\begin{examples}

{\rm On donne ci-dessous deux $3d$-dissections d'un hexadécagone où l'indice est indiqué pour chacun des côtés. La première est ouverte maximale à base ouverte et la deuxième est ouverte maximale mais pas à base ouverte.}

$$
\shorthandoff{; :!?}
\xymatrix @!0 @R=0.35cm @C=0.30cm
 {
&&&&&&1\ar@{-}[llld]_{0}\ar@{-}[rrr]^{0}\ar@{-}[rrrrrrd]_{1}
\ar@{-}[rrrrrrrrddddddddd]_{2}&&&16\ar@{-}[rrrd]^{2}
\\
&&&2\ar@{-}[lldd]_{1}&&&&&&&&& 15\ar@{-}[rrdd]^{0}\\
\\
&3\ar@{-}[ldd]_{2}&&&&&&&&&&&&&14\ar@{-}[rdd]^{2}\\
\\
4\ar@{-}[dd]_{0}&&&&&&&&&&&&&&&13\ar@{-}[dd]^{1}\\
&&&&&&&&&&&&\\
5\ar@{-}[rdd]_{2}\ar@{-}[rrrrrrrrrrrrrrdd]^{1}&&&&&&&&&&&&&&&12\ar@{-}[ldd]^{0}\\
\\
&6\ar@{-}[rrdd]_{1}\ar@{-}[rrrrrrrrrrrrr]_{0}&&&&&&&&&&&&&11\ar@{-}[lldd]^{2}\\
\\
&&&7\ar@{-}[rrrd]_{2}&&&&&&&&& 10\ar@{-}[llld]^{1}\\
&&&&&&8\ar@{-}[rrr]_{0}&&&9
}
\qquad
\xymatrix @!0 @R=0.35cm @C=0.30cm
{
&&&&&&1\ar@{-}[llld]_{2}\ar@{-}[lllllddddddddd]_{1}\ar@{-}[rrr]^{0}&&&16\ar@{-}[rrrd]^{1}\ar@{-}[rrrrrddd]_{2}
\\
&&&2\ar@{-}[lldd]_{0}&&&&&&&&& 15\ar@{-}[rrdd]^{0}\\
\\
&3\ar@{-}[ldd]_{1}&&&&&&&&&&&&&14\ar@{-}[rdd]^{0}\\
\\
4\ar@{-}[dd]_{2}&&&&&&&&&&&&&&&13\ar@{-}[dd]^{1}\\
&&&&&&&&&&&&\\
5\ar@{-}[rdd]_{0}&&&&&&&&&&&&&&&~~12\ar@{-}[ldd]^{0}\\
\\
&6\ar@{-}[rrdd]_{1}\ar@{-}[rrrrrddd]|-{0}\ar@{-}[rrrrrrrrddd]^{2}&&&&&&&&&&&&&11\ar@{-}[lldd]^{2}\ar@{-}[ruuuu]|-{2}\ar@{-}[uuuuuu]^{1}\\
\\
&&&7\ar@{-}[rrrd]_{2}&&&&&&&&& 10\ar@{-}[llld]^{1}\\
&&&&&&8\ar@{-}[rrr]_{1}&&&9\ar@{-}[rrrrruuu]^{0}
}
$$

\end{examples}

Cette catégorie particulière de $3d$-dissections est extrêmement utile car elle offre un représentant très intéressant des classes d'équivalence, comme l'illustre les deux résultats ci-dessous :

\begin{theorem}[\cite{CO}, Théorème 6.10]
\label{23}

Chaque classe d'équivalence de $3d$-dissections contient une unique $3d$-dissection ouverte maximale.

\end{theorem}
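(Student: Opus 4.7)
On démontrera l'existence et l'unicité séparément : l'existence via un procédé de réécriture local préservant la quiddité, et l'unicité par récurrence sur $n$ en identifiant le sous-polygone de base $\mathcal{P}_{b}$ à partir de la quiddité seule.

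\textbf{Existence.} Supposons que $\mathcal{D}$ soit une $3d$-dissection non ouverte maximale ; choisissons alors un sous-polygone témoin $\mathcal{P}' \neq \mathcal{P}_{b}$ dont le côté de base est d'indice $\overline{i}$ et qui possède un autre côté intérieur $d$ de même indice $\overline{i}$. Ce côté $d$ sépare $\mathcal{P}'$ d'un sous-polygone voisin $\mathcal{R}$. L'idée est d'exhiber un \emph{flip} remplaçant $d$ par une autre diagonale de $\mathcal{P}' \cup \mathcal{R}$, donnant une nouvelle $3d$-dissection ayant même quiddité que $\mathcal{D}$ et strictement moins de violations, selon une mesure bien choisie (par exemple, la somme des distances combinatoires des diagonales intérieures au côté de base global). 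La divisibilité par $3$ du nombre de sommets de chaque sous-polygone et la cyclicité modulo $3$ de l'indexation devraient garantir à la fois l'existence d'un tel flip et la préservation des incidences $a_{i}$ en chaque sommet, et la terminaison découlera d'un argument de descente sur la mesure choisie.

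\textbf{Unicité.} On procédera par récurrence forte sur $n$. Soient $\mathcal{D}_{1}$ et $\mathcal{D}_{2}$ deux $3d$-dissections ouvertes maximales de $\mathcal{P}$ ayant la même quiddité. L'étape centrale est d'établir qu'elles ont le même sous-polygone de base $\mathcal{P}_{b}$. La propriété d'ouverture maximale impose en effet que, pour tout sous-polygone non basique, les côtés de même indice que son côté de base (excepté ce dernier) soient extérieurs ; cette rigidité devrait permettre de caractériser combinatoirement les sommets de $\mathcal{P}_{b}$ à partir des seuls entiers $a_{1}, \ldots, a_{n+2}$. Une fois $\mathcal{P}_{b}$ identifié, le découpage de $\mathcal{P}$ le long des côtés non basiques de $\mathcal{P}_{b}$ fournit des polygones strictement plus petits dotés de $3d$-dissections ouvertes maximales (chacun admettant comme côté de base la diagonale selon laquelle on a coupé), dont les quiddités sont entièrement déterminées par celle de $\mathcal{D}$ et par $\mathcal{P}_{b}$ : l'hypothèse de récurrence permet alors de conclure.

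\textbf{Difficulté principale.} Le point le plus délicat sera la construction explicite du flip pour l'existence : il faut à la fois décrire la nouvelle diagonale $d'$ remplaçant $d$, vérifier que la chirurgie locale laisse inchangé le nombre de sous-polygones utilisant chacun des sommets de $\mathcal{P}' \cup \mathcal{R}$, et s'assurer que la mesure globale décroît strictement. Cela exigera vraisemblablement une analyse au cas par cas selon les tailles relatives de $\mathcal{P}'$ et $\mathcal{R}$ et la position de $d$. Pour l'unicité, l'obstacle analogue sera de traduire la condition d'ouverture maximale en une procédure constructive reconstruisant $\mathcal{P}_{b}$ à partir de la quiddité ; l'ingrédient combinatoire sous-jacent est la correspondance entre les indices des côtés et la structure locale de la dissection.
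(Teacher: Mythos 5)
Ce théorème n'est pas démontré dans l'article : il y est importé tel quel de \cite{CO} (Théorème 6.10), si bien qu'il n'existe pas de preuve interne à laquelle comparer votre texte. Il faut donc juger votre proposition pour elle-même, et telle quelle ce n'est pas une démonstration mais un plan, dont les deux moitiés reposent chacune sur une étape non établie. Pour l'existence, le \emph{flip} n'est jamais construit : vous affirmez (\og devrait garantir \fg, \og exigera vraisemblablement \fg) qu'il existe une diagonale de remplacement $d'$ dans $\mathcal{P}'\cup\mathcal{R}$ qui redonne une $3d$-dissection, qui préserve tous les $a_{i}$ et qui fait décroître votre mesure, mais aucun de ces trois points n'est vérifié. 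Or c'est précisément là que réside toute la difficulté : déplacer une diagonale modifie a priori le nombre de sous-polygones incidents aux extrémités de $d$ et de $d'$, donc la quiddité en au moins quatre sommets ; rien ne garantit non plus que la découpe de $\mathcal{P}'\cup\mathcal{R}$ par $d'$ produise encore des sous-polygones dont le nombre de sommets est multiple de $3$, ni que la \og somme des distances au côté de base \fg{} décroisse strictement (un flip peut éloigner une diagonale du côté de base tout en réduisant le nombre de violations, ou l'inverse).

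Pour l'unicité, l'étape centrale de votre récurrence --- la reconstruction de $\mathcal{P}_{b}$ à partir des seuls entiers $a_{1},\ldots,a_{n+2}$ sous l'hypothèse d'ouverture maximale --- est exactement le contenu non trivial du théorème, et vous la présentez comme une conséquence que la \og rigidité devrait permettre \fg{} d'obtenir, sans aucune procédure effective. Sans cette identification, la récurrence ne démarre pas ; et même une fois $\mathcal{P}_{b}$ identifié, il faudrait encore vérifier que les quiddités des morceaux obtenus en découpant le long des côtés de $\mathcal{P}_{b}$ sont bien déterminées par celle de $\mathcal{D}$ (le sommet commun à deux morceaux répartit sa valeur $a_{i}$ entre eux d'une manière qu'il faut contrôler). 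En l'état, les deux arguments sont des déclarations d'intention : la stratégie (normalisation par chirurgies locales pour l'existence, récurrence sur la taille via le sous-polygone de base pour l'unicité) est raisonnable et proche dans l'esprit de ce que font Conley et Ovsienko, mais aucun des deux verrous techniques n'est levé.
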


\begin{remark}
{\rm Ce théorème nous permet de retrouver l'existence d'une bijection entre les triangulations et les quiddités de triangulations.
}
\end{remark}

\begin{proposition}
\label{24}

Soient $n \geq 0$ et $\mathcal{P}$ un polygone convexe à $n+2$ sommets pour lequel on choisit une $3d$-dissection que l'on regarde comme une collection de $3d$-dissections (éventuellement vides) accolées aux côtés du sous-polygone de base.
\\
\\i) La $3d$-dissection de $\mathcal{P}$ est ouverte maximale si et seulement si les $3d$-dissections adjacentes aux côtés du sous-polygone de base sont ouvertes maximales à base ouverte.
\\
\\ii) Supposons que la $3d$-dissection de $\mathcal{P}$ est ouverte maximale. Celle-ci est à base ouverte si et seulement il n'y a pas de $3d$-dissections accolées aux côtés d'indice 0 du sous-polygone de base.

\end{proposition}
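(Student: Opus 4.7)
L'argument reposera sur une observation préliminaire concernant la compatibilité de l'indexation avec la décomposition. Soit $\mathcal{D}'$ une sous-dissection accolée à un côté $c$ de $\mathcal{P}_{b}$, et soit $i$ l'indice attribué à $c$ lors de l'indexation de $\mathcal{P}$. Alors les indices portés par les côtés de $\mathcal{D}'$ dans $\mathcal{P}$ coïncident, à l'ajout uniforme du décalage $i$ près, avec ceux que donnerait l'indexation de $\mathcal{D}'$ vue seule (son côté de base $c$ recevant alors $\overline{0}$). Ceci résulte directement de la règle de propagation \og indice du côté précédent augmenté de $\overline{1}$ \fg, qui est stable par décalage global; je le vérifierais proprement par récurrence sur la \og profondeur \fg{} des sous-polygones de $\mathcal{D}'$ vis-à-vis de $c$. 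Comme la maximale-ouverture et la propriété de base ouverte ne reposent que sur des égalités d'indices entre côtés d'un même sous-polygone, elles sont invariantes par un tel décalage : on peut donc les évaluer indifféremment dans $\mathcal{P}$ ou dans $\mathcal{D}'$ vue seule.

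Pour i), je partitionnerais les sous-polygones de $\mathcal{P}$ distincts de $\mathcal{P}_{b}$ selon la sous-dissection $\mathcal{D}'$ à laquelle ils appartiennent, puis j'examinerais la condition de maximale-ouverture sous-polygone par sous-polygone. Si $\mathcal{Q}$ est le sous-polygone de base de $\mathcal{D}'$, son côté de base (dans $\mathcal{P}$ comme dans $\mathcal{D}'$) est $c$, et la condition imposée à $\mathcal{Q}$ par la maximale-ouverture de $\mathcal{P}$ s'identifie exactement à la propriété de base ouverte pour $\mathcal{D}'$. Sinon, le côté de base de $\mathcal{Q}$ est un côté intérieur à $\mathcal{D}'$, et la condition reprend mot pour mot la maximale-ouverture de $\mathcal{D}'$ en $\mathcal{Q}$. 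En rassemblant toutes ces conditions sur tous les $\mathcal{Q}$ et tous les $\mathcal{D}'$, j'obtiens l'équivalence voulue.

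Pour ii), je reviendrais directement à la définition : sous l'hypothèse de maximale-ouverture, $\mathcal{P}$ est à base ouverte si et seulement si tous les côtés de $\mathcal{P}_{b}$ différents de son côté de base et d'indice $\overline{0}$ sont extérieurs. Or être extérieur équivaut à n'avoir aucune sous-dissection accolée, d'où immédiatement l'énoncé. La véritable difficulté se concentre donc dans la formulation propre du décalage d'indices; une fois ce point technique établi, les deux assertions se lisent quasiment sur la définition, au prix seulement de la partition des sous-polygones décrite plus haut.
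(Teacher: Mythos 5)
Votre démonstration est correcte. Notez d'abord que l'article ne donne aucune preuve de cette proposition : toute la sous-section est rappelée de \cite{CO}, et l'énoncé y est admis ; il n'y a donc pas d'argument interne auquel comparer le vôtre. Le lemme de décalage que vous isolez est bien le point clé : la règle de propagation \og indice du côté précédent augmenté de $\overline{1}$ \fg{} commute avec l'ajout d'une constante et le parcours trigonométrique de chaque sous-polygone est intrinsèque, de sorte que l'indexation de $\mathcal{D}'$ au sein de $\mathcal{P}$ est celle de $\mathcal{D}'$ vue seule, décalée de l'indice $i$ de son côté d'attache $c$ ; comme les conditions de la définition \ref{22} ne font intervenir que des égalités d'indices entre côtés d'un même sous-polygone, elles sont insensibles à ce décalage. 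Le seul point laissé implicite dans votre i) est que le statut extérieur/intérieur d'un côté d'un sous-polygone de $\mathcal{D}'$ est le même qu'on le calcule dans $\mathcal{P}$ ou dans le polygone support de $\mathcal{D}'$, à l'exception du seul côté $c$ (extérieur pour $\mathcal{D}'$, intérieur pour $\mathcal{P}$) ; mais $c$ est précisément le côté de base du sous-polygone de base de $\mathcal{D}'$, donc exclu de toutes les conditions en jeu, et l'identification sous-polygone par sous-polygone que vous décrivez est exacte. La partie ii) se réduit en effet à une relecture de la définition, une fois observé qu'un côté de $\mathcal{P}_{b}$ est extérieur si et seulement si la sous-dissection qui lui est accolée est vide.
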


Grâce à ces résultats, on peut compter, avec des méthodes de séries génératrices multivariées, le nombre $Q_{n}$ de $\lambda$-quiddités et le nombre $P_{n}$ de quiddités de $3d$-dissections ouvertes maximales à base ouverte d'un polygone convexe à $n+2$ sommets (ce décalage de deux entre les indices de $Q_{n}$ et de $P_{n}$ et le nombre de sommet du polygone a été introduit par analogie avec les nombres de Catalan). 

\begin{theorem}[\cite{CO}, Théorèmes 2.1 et 2.2]
\label{25}

Soit $n>0$. On a :
\[P(X)=1+X~P(X)^{2}+X^{4}P(X)^{4}+X^{7}P(X)^{6}+\ldots=1+\frac{X~P(X)^{2}}{1-X^{3}P(X)^{2}},\]
\[P_{n}=\sum_{k=0}^{E\left[\frac{n}{3}\right]} \frac{1}{n-k+1} {n-2k-1 \choose k}{2n-4k \choose n-3k},\]
\[Q(X)=1+X~P(X)^{2}+X^{4}P(X)^{5}+X^{7}P(X)^{8}+\ldots=1+\frac{X~P(X)^{2}}{1-X^{3}P(X)^{3}},\]
\[Q_{n}=\sum_{k=0}^{E\left[\frac{n}{3}\right]} \sum_{s=0}^{k} \frac{3(k-s)+2}{n-s+1} {n-3k+s-2 \choose s}{2n-3k-s-1 \choose n-3k-1}.\]

\end{theorem}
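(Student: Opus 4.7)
The plan is to split the proof into two stages. First, derive the functional equations for $P(X)$ and $Q(X)$ via a combinatorial decomposition centred on the base sub-polygon. Second, extract the closed-form coefficients by a plane-tree enumeration based on the Lukasiewicz cycle lemma.

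For the functional equations, I decompose each open-maximal-base-open $3d$-dissection of an $(n+2)$-gon according to its base sub-polygon, which is a $3k$-gon for some $k \geq 1$. By Proposition \ref{24}, the base-open condition prohibits any sub-dissection on the $k-1$ non-base sides of index $0$, while each of the $2k$ non-index-$0$ sides may carry an arbitrary open-maximal-base-open sub-dissection. Translating to generating functions, $P(X) = 1 + \sum_{k \geq 1} X^{3k-2} P(X)^{2k}$, and summing the geometric series in $X^{3}P^{2}$ yields the stated closed form. For $Q(X)$, Theorem \ref{23} and Proposition \ref{24}(i) imply that each open-maximal $3d$-dissection decomposes as a base $3k$-gon together with an open-maximal-base-open sub-dissection on each of its $3k-1$ non-base sides, giving $Q(X) = 1 + \sum_{k \geq 1} X^{3k-2} P(X)^{3k-1}$ and the corresponding geometric sum.

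For the closed form of $P_n$, I interpret $P_n$ as the count of plane trees where each internal node has a \emph{type} $k \geq 1$ and exactly $2k$ ordered children (each a further internal node or a leaf, corresponding respectively to a nontrivial or empty attached sub-dissection). For such a tree with $m$ internal nodes whose types sum to $\sigma = m+k$, direct bookkeeping gives the number of leaves $L = m+2k+1$, the total node count $N = 2m+2k+1$, and the polygon size $n = m+3k$. The Lukasiewicz cycle lemma yields $\frac{(N-1)!}{L!\,\prod_j n_j!}$ plane trees with a prescribed arity multi-set, where $n_j$ is the number of internal nodes of type $j$. The multinomial identity $\sum \frac{m!}{\prod n_j!} = \binom{m+k-1}{k}$, summed over the type distributions satisfying $\sum n_j = m$ and $\sum j\, n_j = m+k$, collapses the inner sum. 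Substituting $m = n-3k$ and $L = n-k+1$, the contribution for fixed $k$ simplifies to $\frac{1}{n-k+1}\binom{n-2k-1}{k}\binom{2n-4k}{n-3k}$, and summing over $k$ gives the formula for $P_n$.

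For $Q_n$, the analogous plane-tree model has a distinguished root of type $k_{0} = s+1$ with $3s+2$ children, each a $P$-subtree, so $Q_n = \sum_{s \geq 0} [X^{n-3s-1}]\,P(X)^{3s+2}$. The \emph{forest} version of the cycle lemma — an ordered $r$-forest corresponds to a Lukasiewicz-like sequence of total deficit $-r$ admitting exactly $r$ valid rotations out of $N$, so the count with a prescribed arity multi-set is multiplied by $r$ compared to a single tree — applies with $r = 3s+2$. Performing the same multinomial collapse, the reindexing $k \mapsto s+k$, and the elementary identity $\frac{1}{a}\binom{a}{b} = \frac{1}{a-b}\binom{a-1}{b}$ to reshape the outer binomial coefficient, I recover the stated double-sum formula for $Q_n$. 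The principal technical obstacle is this bookkeeping: tracking leaves, arities, and type distributions with care, and performing the correct index manipulations so that the result matches the symmetric form of the stated expression for $Q_n$.
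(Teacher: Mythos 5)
Your proposal is correct, but note that the paper itself gives no proof of this statement: Théorème \ref{25} is imported verbatim from Conley--Ovsienko (\cite{CO}, Théorèmes 2.1 et 2.2), so you have supplied an argument where the paper only cites. Your derivation of the two functional equations is exactly the decomposition the paper's Section \ref{compt} sets up (base sub-polygon a $3k$-gon, Proposition \ref{24} forcing the $k-1$ non-base index-$0$ sides to be bare in the base-open case and allowing arbitrary base-open sub-dissections on the other $2k$, resp.\ $3k-1$, sides; Théorème \ref{23} converting quiddity counts into counts of ouvertes maximales), and it matches what \cite{CO} does. Where you genuinely diverge is in the coefficient extraction: \cite{CO} obtains $P_n$, $Q_n$ and the key auxiliary formula for $[X^p]P(X)^e$ (quoted here as Proposition \ref{26}) by Lagrange inversion applied to the algebraic equation for $P$, whereas you re-derive the same quantities bijectively via the Lukasiewicz/cycle lemma for plane trees and forests with arity set $\{2k\}_{k\ge1}$. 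I checked your bookkeeping: with $m$ internal nodes and type excess $k=\sigma-m$ one gets $n=m+3k$, $L=n-k+1$, $N-1=2n-4k$, the multinomial collapse gives $\binom{n-2k-1}{k}$, and $\frac{(N-1)!}{L!\,m!}=\frac{1}{n-k+1}\binom{2n-4k}{n-3k}$, so the $P_n$ formula drops out; the forest version with $r=3s+2$ roots reproduces Proposition \ref{26} for $e=3s+2$, and the substitution $(s,k)\mapsto(k-s,s)$ turns $\sum_{s}\,[X^{n-3s-1}]P^{3s+2}$ into the stated double sum for $Q_n$. The trade-off is the expected one: Lagrange inversion is shorter, while your cycle-lemma route gives a direct combinatorial meaning to each factor (in particular to $\binom{n-2k-1}{k}$ as a count of type distributions), which is in the spirit of the combinatorial interpretations the paper pursues in Corollaire \ref{27}.
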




\subsection{Calcul des coefficients de $P^{-1}$}
\label{calc}

Pour obtenir les valeurs de $V_{k,n}$, on aura besoin des pouvoir calculer les coefficients de l'inverse de $P$ pour la multiplication de $\mathbb{Z}[[X]]$ (notons que ce dernier existe puisque $P_{0}=1$). Une des premières façons de procéder est d'utiliser la formule récursive générale donnant les coefficients de l'inverse multiplicatif d'une série formelle. En effet, on sait que si $\left(\sum_{n=0}^{+\infty} a_{n} X^{n}\right)\left(\sum_{n=0}^{+\infty} b_{n} X^{n}\right)=1$ dans $A[[X]]$ (avec $A$ un anneau commutatif unitaire) alors $b_{0}=a_{0}^{-1}$ et, pour tout $n \geq 1$, $b_{n}=-a_{0}^{-1}\sum_{k=1}^{n} a_{k}b_{n-k}$. On pourrait également utiliser des formules directes générales, telle que la formule de Wronski (voir \cite{H} Théorème 1.3) ou des formules utilisant des multi-indices (voir par exemple \cite{L} Théorème 11.13). Cependant, l'existence d'une équation fonctionnelle pour $P$ va nous permettre d'obtenir une formule plus intéressante, donnée dans le théorème \ref{10}. Avant de prouver celle-ci, on énonce un résultat intermédiaire dont on aura besoin.

\begin{proposition}[\cite{CO} lemme 4.3]
\label{26}

Soit $e \geq 1$.
\[[P(X)]^{e}=\sum_{n=0}^{+\infty} \left(\sum_{k=0}^{E\left[\frac{n}{3}\right]} \frac{e}{n-k+e} {n-2k-1 \choose k}{2n-4k+e-1 \choose n-3k}\right) X^{n}.\]

\end{proposition}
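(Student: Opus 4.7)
The idea is to apply the Lagrange-Bürmann inversion formula to the functional equation for $P$, using an auxiliary formal indeterminate in order to decouple the $X$ and the $X^{3}$ appearing in the equation.

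I would first introduce a new variable $T$, independent of $X$, and define $\mathscr{P}(X,T) \in \mathbb{Z}[[X,T]]$ as the unique formal power series with $\mathscr{P}(0,T) = 1$ satisfying
\[
\mathscr{P} \;=\; 1 + \frac{X\,\mathscr{P}^{2}}{1 - T\,\mathscr{P}^{2}}.
\]
The series of the proposition is then $P(X) = \mathscr{P}(X,X^{3})$; the substitution $T \mapsto X^{3}$ is a well-defined ring morphism $\mathbb{Z}[[X,T]] \to \mathbb{Z}[[X]]$ because for every $N \geq 0$ only finitely many pairs of non-negative integers $(n,k)$ satisfy $n + 3k = N$. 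Setting $W := \mathscr{P} - 1$, the equation becomes $W = X\,\tilde{\phi}(W)$ with $\tilde{\phi}(Y) := (1+Y)^{2}/(1 - T(1+Y)^{2})$, and $\tilde{\phi}(0) = (1-T)^{-1}$ is a unit in $\mathbb{Z}[[T]]$, so the Lagrange-Bürmann inversion applies.

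Applying it with $G(Y) = (1+Y)^{e}$ gives, for $n \geq 1$,
\[
[X^{n}]\,\mathscr{P}^{e} \;=\; \frac{e}{n}\,[Y^{n-1}]\,(1+Y)^{e-1}\,\tilde{\phi}(Y)^{n} \;=\; \frac{e}{n}\,[Y^{n-1}]\,\frac{(1+Y)^{2n+e-1}}{(1 - T(1+Y)^{2})^{n}}.
\]
I would then expand the denominator using the negative binomial series $(1 - TZ)^{-n} = \sum_{k \geq 0}\binom{n+k-1}{k}T^{k}Z^{k}$ with $Z = (1+Y)^{2}$, and extract $[Y^{n-1}](1+Y)^{2n+e-1+2k}$ to conclude
\[
[X^{n} T^{k}]\,\mathscr{P}^{e} \;=\; \frac{e}{n}\binom{n+k-1}{k}\binom{2n+2k+e-1}{n-1}.
\]

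Specializing $T = X^{3}$ collects contributions with $n + 3k = N$ fixed. The $N = 0$ case is trivial. For $N \geq 1$, the $n = 0$ boundary does not contribute and the terms $k = 0,\ldots,\lfloor N/3 \rfloor$ produce, after renaming $N$ to $n$,
\[
[X^{n}]P^{e} \;=\; \sum_{k=0}^{\lfloor n/3 \rfloor} \frac{e}{n-3k}\binom{n-2k-1}{k}\binom{2n-4k+e-1}{n-3k-1},
\]
where the index $k = n/3$ (if $3 \mid n$) can be included since $\binom{n-2k-1}{k} = \binom{k-1}{k} = 0$ there. A final use of the identity $\binom{M}{r-1} = \frac{r}{M-r+1}\binom{M}{r}$ with $M = 2n-4k+e-1$ and $r = n-3k$ (so $M - r + 1 = n - k + e$) rewrites $\frac{e}{n-3k}\binom{2n-4k+e-1}{n-3k-1}$ as $\frac{e}{n-k+e}\binom{2n-4k+e-1}{n-3k}$ and produces the formula stated in the proposition.

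The main conceptual step is the introduction of the auxiliary variable $T$, so that Lagrange-Bürmann applies with a $\phi$ not depending on $X$; once this framework is in place, the remainder is a direct coefficient extraction and a routine binomial rewriting.
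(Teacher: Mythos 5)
The paper itself gives no proof of this proposition: it is quoted directly from Conley--Ovsienko (\cite{CO}, Lemme 4.3), so there is no internal argument to compare yours against. Your proof is correct and self-contained, and it follows what is essentially the method of \cite{CO}: introduce an auxiliary variable $T$ to decouple the two occurrences of $X$ in the functional equation $P=1+\frac{XP^{2}}{1-X^{3}P^{2}}$, apply Lagrange--Bürmann to $W=X\tilde{\phi}(W)$ with $G(Y)=(1+Y)^{e}$, expand the negative binomial series, and specialize $T=X^{3}$. The coefficient extractions all check out: $[X^{n}T^{k}]\mathscr{P}^{e}=\frac{e}{n}\binom{n+k-1}{k}\binom{2n+2k+e-1}{n-1}$ is right, the $n=0$ boundary contributes nothing after specialization since $\mathscr{P}(0,T)=1$, and the rewriting $\frac{e}{r}\binom{M}{r-1}=\frac{e}{M-r+1}\binom{M}{r}$ with $M=2n-4k+e-1$, $r=n-3k$, $M-r+1=n-k+e$ yields exactly the stated summand (which also returns $1$ at $n=0$ under the paper's conventions). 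One cosmetic point: in your intermediate display the summand carries the factor $\frac{e}{n-3k}$, which is undefined at $k=n/3$ when $3\mid n$; you should adjoin that index only \emph{after} the binomial rewriting, where the term is genuinely $0$ because $\binom{k-1}{k}=0$, rather than ``include'' it in a formula whose denominator vanishes. This is purely presentational and does not affect the validity of the argument.
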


\begin{proof}[Démonstration du théorème \ref{10}]

D'après le le théorème \ref{25}, $P(X)=1+\frac{X~P(X)^{2}}{1-X^{3}P(X)^{2}}$. Ainsi,
\[P^{-1}(X)=\frac{1-X^{3}P(X)^{2}}{1-X^{3}P(X)^{2}+X~P(X)^{2}}=1-\frac{X~P(X)^{2}}{1-(X^{3}P(X)^{2}-X~P(X)^{2})}.\]
Par conséquent, on a :
\begin{eqnarray*}
P^{-1}(X) &=& 1-X~P(X)^{2} \sum_{j=0}^{+\infty} (X^{3}P(X)^{2}-X~P(X)^{2})^{j} \\
          &=& 1-X~P(X)^{2} \sum_{j=0}^{+\infty} P(X)^{2j}(X^{3}-X)^{j} \\
          &=& 1-\sum_{j=0}^{+\infty} X~P(X)^{2j+2} \sum_{k=0}^{j} {j \choose k} (-1)^{j-k}X^{3k}X^{j-k} \\
          &=& 1-\sum_{j=0}^{+\infty} \left(\sum_{k=0}^{j} {j \choose k} (-1)^{j-k}X^{j+2k+1}\right) P(X)^{2j+2}. \\
\end{eqnarray*}

\noindent On a $\tilde{P_{0}}=1$. Soit $n \geq 1$. Comme $j+2k+1>n$ si $j \geq n$ et $j+2k+1>n$ si $k>\frac{n-j-1}{2}$, on a, d'après la formule précédente :
\begin{eqnarray*}
\tilde{P_{n}} &=& -\sum_{j=0}^{n-1}\sum_{k=0}^{E\left[\frac{n-j-1}{2}\right]} (-1)^{j-k} {j \choose k} [X^{n-j-2k-1}]P(X)^{2j+2} \\
              &=& -\sum_{j=0}^{n-1}\sum_{k=0}^{E\left[\frac{n-j-1}{2}\right]} (-1)^{j-k} {j \choose k} \sum_{l=0}^{E\left[\frac{n-j-2k-1}{3}\right]} \frac{2j+2}{n+j-2k-l+1} {n-j-2k-2l-2 \choose l}{2n-4k-4l-1 \choose n-j-2k-3l-1} \\
							& & {\rm (proposition}~\ref{26}).
\end{eqnarray*}

\end{proof}

\noindent La signification combinatoire de $-\tilde{P_{n}}$ est donnée dans la prochaine sous-partie.

\subsection{Nombre de quiddités dont la dernière composante est fixée}
\label{preuve}

L'objectif de cette sous-section est d'obtenir des formules permettant de calculer $V_{k,n}$. Avant de rechercher ces dernières, notons que certaines valeurs de $V_{k,n}$ sont déjà connues. En effet, une $3d$-dissection d'un polygone convexe à $n+2$ sommets contient au plus $n$ sous-polygones. Si de plus un de ces sous-polygones n'est pas un triangle alors la décomposition contient au plus $n-3$ sous-polygones. Ainsi, si $k>n$ on a nécessairement $V_{k,n}=0$. De plus, si $n-2 \leq k \leq n$ alors les $3d$-dissections dont les quiddités ont $k$ comme dernière composante ne peuvent contenir que des triangles. Or, il y a une bijection entre les triangulations et les quiddités de triangulations. Donc, si on connaît le nombre de triangulations pour lesquels $k$ triangles, avec $k \in \{n-2,n-1,n\}$, utilisent le dernier sommet, on aura les formules pour $V_{n,n}$, $V_{n-1,n}$ et $V_{n-2,n}$. Grâce à ce raisonnement, on peut obtenir :

\begin{proposition}
\label{26bis}

Soit $n \geq 3$. $V_{n,n}=1$, $V_{n-1,n}=n-1$, $V_{n-2,n}=\frac{(n+1)(n-2)}{2}$. Par ailleurs, si $1 \leq k \leq n$, $V_{k,n}>0$.

\end{proposition}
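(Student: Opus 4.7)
Mon plan est le suivant. La discussion précédant l'énoncé ramène le problème, pour $n-2 \leq k \leq n$, au comptage des triangulations de l'$(n+2)$-gone où le sommet $n+2$ est incident à exactement $k$ triangles, grâce à la bijection entre triangulations et leurs quiddités. J'utiliserai l'identité classique : dans une triangulation d'un polygone convexe, le nombre de triangles incidents à un sommet vaut le nombre de diagonales issues de ce sommet augmenté de $1$. Compter les triangulations avec $a_{n+2} = k$ se ramène donc à choisir $k-1$ sommets dans $\{2, \ldots, n\}$ à relier à $n+2$, puis à trianguler chaque secteur obtenu sans y ajouter de diagonale depuis $n+2$. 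Un secteur comportant $s$ sommets distincts de $n+2$ admet exactement $C_{s-2}$ telles triangulations, car on est contraint de poser la diagonale reliant les deux voisins de $n+2$ dans le secteur, puis on triangule librement le sous-polygone intérieur à $s$ sommets.

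Pour les trois formules explicites, je traite chaque cas séparément. Pour $V_{n,n}$, il faut $n-1$ diagonales depuis $n+2$, ce qui force l'éventail complet et donne $V_{n,n}=1$. Pour $V_{n-1,n}$, on choisit le sommet $i \in \{2,\ldots,n\}$ non relié à $n+2$ ($n-1$ choix), et le secteur correspondant est un quadrilatère admettant une unique triangulation contrainte ($C_{1}=1$), d'où $V_{n-1,n}=n-1$. Pour $V_{n-2,n}$, on choisit une paire $\{i,j\} \subseteq \{2,\ldots,n\}$ ($\binom{n-1}{2}$ façons) et je distingue selon l'adjacence de $i$ et $j$ dans le polygone. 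Si les deux sommets sont adjacents (soit $n-2$ paires), le secteur est un pentagone se triangulant de $C_{2}=2$ façons ; sinon, les deux secteurs sont des quadrilatères indépendants, chacun offrant $1$ triangulation contrainte. Le comptage donne
\[V_{n-2,n} = \left(\binom{n-1}{2} - (n-2)\right) + 2(n-2) = \binom{n-1}{2} + (n-2) = \frac{(n+1)(n-2)}{2}.\]

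Pour la positivité $V_{k,n}\geq 1$ lorsque $1 \leq k \leq n$, j'exhibe une triangulation explicite : je relie $n+2$ aux sommets $2, 3, \ldots, k$ par $k-1$ diagonales, puis, si $k<n$, j'ajoute la diagonale $(k,n+1)$ et je triangule en éventail le sous-polygone $k,k+1,\ldots,n+1$ depuis $k$. Par construction, $n+2$ est incident à exactement $k$ triangles, ce qui fournit une $\lambda$-quiddité de taille $n+2$ dont la dernière composante vaut $k$. Le principal obstacle technique sera le cas $V_{n-2,n}$, où il faudra correctement recenser les $n-2$ paires adjacentes dans $\{2,\ldots,n\}$ et vérifier qu'une fois imposé le triangle $(n+2,i-1,i+2)$ (seul triangle incident à $n+2$ dans le pentagone évitant l'ajout d'une diagonale depuis $n+2$), le quadrilatère restant admet bien $C_{2}=2$ triangulations.
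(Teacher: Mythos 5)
Votre démonstration est correcte, mais elle s'organise autour d'une décomposition différente de celle du papier. Vous partez de l'identité \og nombre de triangles incidents à $n+2$ $=$ nombre de diagonales issues de $n+2$ plus $1$ \fg, vous choisissez les $k-1$ sommets de $\{2,\ldots,n\}$ reliés à $n+2$, puis vous comptez les triangulations de chaque secteur n'utilisant plus de diagonale en $n+2$, soit $C_{s-2}$ pour un secteur à $s$ sommets hors $n+2$ (la diagonale entre les deux voisins de $n+2$ dans le secteur étant forcée dès que $s\geq 3$). Le papier procède de façon complémentaire : il identifie directement les sous-polygones qui n'utilisent pas le sommet $n+2$ (une unique oreille pour $V_{n-1,n}$ ; un quadrilatère extérieur coupé de deux façons, ou bien deux triangles extérieurs, pour $V_{n-2,n}$) et les dénombre au cas par cas. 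Les deux comptages se correspondent terme à terme : omettre la paire adjacente $\{j,j+1\}$ revient à choisir le quadrilatère extérieur $j-1,j,j+1,j+2$ et l'une de ses deux découpes, omettre une paire non adjacente revient à choisir deux oreilles ; votre calcul $\binom{n-1}{2}+(n-2)=\frac{(n+1)(n-2)}{2}$ coïncide bien avec le $2(n-2)+(n-3)(n-1)-\frac{(n-1)(n-2)}{2}+1$ du papier. Votre formulation \og produit de Catalan sur les secteurs \fg{} est toutefois plus systématique : elle fournirait uniformément le nombre de triangulations dont un sommet fixé a un degré prescrit, là où l'argument du papier reste ad hoc pour les trois cas $k=n,n-1,n-2$. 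Enfin, votre construction explicite pour la positivité (éventail partiel depuis $n+2$ vers $2,\ldots,k$, diagonale $(k,n+1)$ si $k<n$, puis éventail depuis $k$) diffère légèrement de celle du papier, qui triangule en éventail le polygone $1,n-k+2,\ldots,n+2$ puis arbitrairement le polygone restant, mais elle remplit exactement le même office.
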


\begin{proof}

i) Il n'y a qu'une seule triangulation pour laquelle $n$ triangles utilisent le dernier sommet, celle où les triangles ont pour sommets $i,i+1,n+2$ avec $1 \leq i \leq n$. Ainsi, $V_{n,n}=1$.
$$
\shorthandoff{; :!?}
\xymatrix @!0 @R=0.40cm @C=0.5cm
{
&&1\ar@{-}[lldd] \ar@{-}[rr]&&n+2\ar@{-}[rrdd]\ar@{-}[dddddd]\ar@{-}[rrdddd]&
\\
&&&
\\
2\ar@{-}[dd]\ar@{-}[rrrruu]&&&&&& n+1 \ar@{-}[dd]
\\
&&&&
\\
3\ar@{-}[rrrruuuu]&&&&&& n
\\
&&&
\\
&&4\ar@{.}[rr]\ar@{-}[rruuuuuu] \ar@{-}[lluu] &&n-1 \ar@{-}[rruu]
}
$$
ii) Si $k=n-1$, il y a $n-1$ triangulations pour lesquels le dernier sommet est utilisé par $n-1$ sommets. En effet, pour construire une telle triangulation, il suffit de choisir un triangle $T$ qui n'utilise pas le sommet $n+2$, les $n-1$ triangles restants étant alors de la forme $i,i+1,n+2$. Or, ce triangle ne peut-être qu'un triangle extérieur de sommets $i-1,i,i+1$, avec $2 \leq i \leq n$, car si $T$ n'était pas extérieur, il empêcherait le tracé de certains triangles $i,i+1,n+2$. Par exemple, dans la figure ci-dessous, on ne peut pas construire le triangle de sommets $n,n+1,n+2$.
$$
\shorthandoff{; :!?}
\xymatrix @!0 @R=0.40cm @C=0.5cm
{
&&1\ar@{-}[lldd] \ar@{-}[rr]&&n+2\ar@{-}[rrdd]\ar@{-}[lllldd]\ar@{-}[lllldddd]&
\\
&&&
\\
2\ar@{-}[dd]&&&&&& n+1 \ar@{-}[dd]\ar@{-}[lldddd]\ar@{-}[lllldddd]
\\
&&&&
\\
3&&&&&& n
\\
&&&
\\
&&4\ar@{-}[rr]\ar@{-}[lluu]\ar@{-}[rruuuuuu]  &&5 \ar@{.}[rruu]
}
$$
Ainsi, $V_{n-1,n}=n-1$.
\\
\\iii) Montrons maintenant que $V_{n-2,n}=\frac{(n+1)(n-2)}{2}$. Soit $\mathcal{P}$ un polygone à $n+2$ côtés dont les sommets sont numérotés dans le sens trigonométrique de 1 à $n+2$. Pour construire une triangulation de $\mathcal{P}$ dans laquelle $n-2$ triangles utilisent le sommet $n+2$, on a deux possibilités : soit on choisit un quadrilatère extérieur n'utilisant pas le sommet $n+2$ (que l'on coupe ensuite en deux) soit on choisit deux triangles extérieurs n'utilisant pas le sommet $n+2$. Une fois ce choix fait, les $n-2$ triangles restants sont tous de la forme $i,i+1,n+2$. Comptons maintenant le nombre de possibilités offertes par chaque cas :
\begin{itemize}
\item Pour avoir un quadrilatère extérieur n'utilisant pas le sommet $n+2$, on choisit un sommet $1 \leq i \leq n-2$ et on construit le quadrilatère dont les sommets sont $i,i+1,i+2,i+3$. Ensuite, on coupe ce quadrilatère en deux en choisissant une des deux diagonales possibles (dans la figure ci-dessous $i_{k}:=i+k$).
$$
\shorthandoff{; :!?}
\xymatrix @!0 @R=0.40cm @C=0.5cm
{
&&1\ar@{.}[lldd] \ar@{-}[rr]&&n+2\ar@{-}[rrdd]&
\\
&&&
\\
i\ar@{-}[dd]\ar@{-}[rrdddd]&&&&&& n+1 \ar@{-}[dd]
\\
&&&&
\\
i_{1}&&&&&& n
\\
&&&
\\
&&i_{2}\ar@{-}[rr] \ar@{-}[lluu] &&i_{3} \ar@{.}[rruu]\ar@{-}[lllluuuu]
}
\qquad
\xymatrix @!0 @R=0.40cm @C=0.5cm
{
&&1\ar@{.}[lldd] \ar@{-}[rr]&&n+2\ar@{-}[rrdd]&
\\
&&&
\\
i\ar@{-}[dd]&&&&&& n+1 \ar@{-}[dd]
\\
&&&&
\\
i_{1}\ar@{-}[rrrrdd]&&&&&& n
\\
&&&
\\
&&i_{2}\ar@{-}[rr] \ar@{-}[lluu] &&i_{3} \ar@{.}[rruu]\ar@{-}[lllluuuu]
}
$$
Cela donne au final $2(n-2)$ possibilités.
\item Pour avoir deux triangles extérieurs n'utilisant pas le sommet $n+2$, on commence par choisir un $2 \leq i \leq n-2$ et on construit le triangle de sommets $i-1,i,i+1$. Ensuite, on choisit un $i+2 \leq j \leq n$ et on construit le triangle de sommets $j-1,j,j+1$ (dans la figure ci-dessous $i_{k}:=i+k$, $j_{k}:=j+k$ et $n_{k}:=n+k$).
$$
\shorthandoff{; :!?}
\xymatrix @!0 @R=0.32cm @C=0.40cm
 {
&&&n_{2}\ar@{-}[lld]\ar@{-}[rrd]&
\\
&1\ar@{.}[ldd]&&&& n_{1}\ar@{-}[rdd]\\
\\
i_{-1}\ar@{-}[dd]\ar@{-}[rdddd]&&&&&&n\ar@{.}[dd]\\
\\
i\ar@{-}[rdd]&&&&&&j_{1}\ar@{-}[ldd]\ar@{-}[lllddd]\\
\\
&i_{1}\ar@{.}[rrd]&&&& j\ar@{-}[lld]\\
&&&j_{-1}&
}
$$
Cela donne au final $\sum_{i=2}^{n-2} n-i-1=(n-3)(n-1)-\frac{(n-1)(n-2)}{2}+1$ possibilités.
\\
\end{itemize}

\noindent On a donc $V_{n-2,n}=(n-3)(n-1)-\frac{(n-1)(n-2)}{2}+1+2(n-2)=
\frac{(n+1)(n-2)}{2}$.
\\
\\iv) Soit $1 \leq k \leq n$. On triangule le polygone de sommets $1,n-k+2,\ldots,n+2$ avec la méthode présentée en i). Puis, on effectue une triangulation quelconque du polygone de sommets $1,2,\ldots,n-k+1$. On obtient ainsi une triangulation où exactement $k$ triangles utilisent le sommet $n+2$. Par conséquent, $V_{k,n}>0$. 

\end{proof}

\noindent On va maintenant faire la preuve du résultat principal de cette section.

\begin{proof}[Démonstration du théorème \ref{11}]

Soient $n \geq 1$, $k \geq 1$ et $\mathcal{P}$ un polygone convexe à $n+2$ sommets, ces derniers étant numérotés de 1 à $n+2$ dans le sens trigonométrique. Notons $U_{k,n}$ le nombre de $3d$-dissections ouvertes maximales à base ouverte de $\mathcal{P}$ dont le dernier sommet utilise $k$ sous-polygones, $U_{k,0}:=0$ et $U_{k}:=\sum_{n=0}^{+\infty} U_{k,n} X^{n}$. Par le théorème \ref{23}, $V_{k,n}$ est le nombre de $3d$-dissections ouvertes maximales de $\mathcal{P}$ dont le dernier sommet est utilisé par $k$ sous-polygones.
\\
\\On commence par chercher une formule pour $U_{1}$. Pour construire une $3d$-dissection ouverte maximale à base ouverte de $\mathcal{P}$, on commence par choisir le sommet du sous-polygone de base dont le numéro $i \leq n+1$ est le plus grand, ce qui permet de construire le côté de sommets $i$ et $n+2$.

$$
\shorthandoff{; :!?}
\xymatrix @!0 @R=0.60cm @C=0.6cm
{
&&1\ar@{-}[lldd] \ar@{-}[rr]&&n+2\ar@{-}[rrdd]\ar@{-}[lldddddd]&
\\
&&&
\\
2\ar@{-}[dd]&&&&&& n+1 \ar@{-}[dd]
\\
&&&&
\\
3&&&&&& n
\\
&&&
\\
&&i\ar@{.}[rr] \ar@{.}[lluu] &&n-1 \ar@{-}[rruu]
} 
$$

\noindent Pour avoir une $3d$-dissection ouverte maximale à base ouverte sur $\mathcal{P}$, on doit construire une $3d$-dissection ouverte maximale à base ouverte pour le polygone $i,i+1,\ldots,n+1,n+2$ et une $3d$-dissection ouverte maximale à base ouverte pour le polygone $1,2,\ldots,i,n+2$ dont le dernier sommet n'est utilisé que par un seul sous-polygone. Cela donne $U_{1,i-1}P_{n+1-i}$ constructions possibles. Puisque $i$ peut varier de 2 à $n+1$, on a $\sum_{i=2}^{n+1} U_{1,i-1}P_{n+i-1}=\sum_{j=1}^{n} U_{1,j}P_{n-j}$ possibilités. Comme $U_{1,0}=0$, on peut rajouter $U_{1,0}P_{n}$, ce qui donne pour tout $n \geq 1$:
\[P_{n}=\sum_{j=0}^{n} U_{1,j}P_{n-j}.\]

\noindent On a également $P_{0}-1=U_{1,0}P_{0}$ et on reconnaît ici la formule du produit de Cauchy de $U_{1}$ et de $P$. Plus précisément, on a $P-1=U_{1}P$. Comme $P_{0}=1$, $P$ est inversible dans $\mathbb{Z}[[X]]$. Par conséquent, $U_{1}=1-P^{-1}$.
\\
\\On s'intéresse maintenant à $U_{k}$. Pour avoir une $3d$-dissection ouverte maximale à base ouverte sur $\mathcal{P}$ dont le dernier sommet est utilisé par $k$ sous-polygones, on choisit un sommet $i$ comme dans le cas précédent, avec $i \leq n-k+2$. Ensuite, on doit construire une $3d$-dissection ouverte maximale à base ouverte pour le polygone $i,i+1,\ldots,n+1,n+2$ dont le dernier sommet est utilisé par $k-1$ sous-polygones et une $3d$-dissection ouverte maximale à base ouverte pour le polygone $1,2,\ldots,i,n+2$ dont le dernier sommet n'est utilisé que par un seul sous-polygone. On a ainsi $U_{1,i-1}U_{k-1,n+1-i}$ constructions possibles. Cela donne $\sum_{i=2}^{n-k+2} U_{1,i-1}U_{k-1,n-i+1}=\sum_{j=1}^{n-k+1} U_{1,j}U_{k-1,n-j}$ possibilités. Comme $U_{k-1,n-j}=0$ pour $j \geq n-k+2$ et $U_{1,0}=0$, on a :
\[U_{k,n}=\sum_{j=0}^{n} U_{1,j}U_{k-1n-j}.\]

\noindent Ainsi, $U_{k}=U_{1}U_{k-1}=U_{1}^{2}U_{k-2}=\ldots=U_{1}^{k}=(1-P^{-1})^{k}$.
\\
\\On va maintenant s'intéresser à $V_{1}$, en utilisant des raisonnements analogues. Pour construire une $3d$-dissection ouverte maximale de $\mathcal{P}$, on commence par choisir le sommet du sous-polygone de base dont le numéro $i \leq n+1$ est le plus grand, ce qui permet de construire le côté de sommets $i$ et $n+2$. Pour avoir une $3d$-dissection ouverte maximale sur $\mathcal{P}$, on doit construire une $3d$-dissection ouverte maximale à base ouverte pour le polygone $i,i+1,\ldots,n+1,n+2$ et une $3d$-dissection ouverte maximale pour le polygone $1,2,\ldots,i,n+2$ dont le dernier sommet n'est utilisé que par un seul sous-polygone. Cela donne $V_{1,i-1}P_{n+1-i}$ constructions possibles. Puisque $i$ peut varier de 2 à $n+1$, on a $\sum_{i=2}^{n+1} V_{1,i-1}P_{n+i-1}=\sum_{j=1}^{n} V_{1,j}P_{n-j}$ possibilités. Comme $V_{1,0}=0$, on peut rajouter $V_{1,0}P_{n}$, ce qui donne 
\[Q_{n}=\sum_{j=0}^{n} V_{1,j}P_{n-j}.\]

\noindent On a également $Q_{0}-1=V_{1,0}P_{0}$ et on reconnaît ici la formule du produit de Cauchy de $V_{1}$ et de $P$. Plus précisément, on a $Q-1=V_{1}P$, c'est-à-dire $V_{1}=(Q-1)P^{-1}$.
\\
\\En reprenant le raisonnement fait pour $V_{1}$ et celui effectué pour $U_{k}$, on trouve les formules ci-dessous :
\[V_{k}=V_{1}U_{k-1}=V_{1}U_{1}^{k-1}=[(Q-1)P^{-1}](1-P^{-1})^{k-1}.\]

\noindent Pour terminer la preuve, il ne nous reste donc plus qu'à démontrer la formule explicite de $V_{1}$ présente dans l'énoncé.
\\
\\Par le théorème \ref{25}, $Q(X)-1=\sum_{n=0}^{+\infty} X^{3n+1}P(X)^{3n+2}$. Donc, $(Q(X)-1)P^{-1}(X)=\sum_{n=0}^{+\infty} X^{3n+1}P(X)^{3n+1}$. Ainsi, pour tout $n \geq 1$, le coefficient $[X^{n}](Q(X)-1)P^{-1}(X)$ est $\sum_{j=0}^{E\left[\frac{n-1}{3}\right]} [X^{n-3j-1}]P(X)^{3j+1}$. Par la proposition \ref{26}, on a :

\[V_{1}=(Q-1)P^{-1}=\sum_{n=1}^{+\infty} \left(\sum_{j=0}^{E\left[\frac{n-1}{3}\right]} \sum_{k=0}^{E\left[\frac{n-3j-1}{3}\right]} \frac{3j+1}{n-k} {n-3j-1-2k-1 \choose k} {2n-4k-3j-2 \choose n-3k-3j-1}\right)X^{n}.\]

\end{proof}

Ainsi, en combinant les théorèmes \ref{10} et \ref{11} et en utilisant la formule du produit de Cauchy, on peut obtenir les valeurs souhaitées. De plus, la preuve que nous venons d'effectuer nous permet d'avoir immédiatement le résultat ci-dessous.

\begin{corollary}
\label{27}

Soient $n \geq 1$, $k \geq 1$ et $\mathcal{P}$ un polygone convexe à $n+2$ sommets, ces derniers étant numérotés de 1 à $n+2$ dans le sens trigonométrique.
\\
\\i) $-\tilde{P_{n}}$ est le nombre de $3d$-dissections ouvertes maximales à base ouverte de $\mathcal{P}$ dont le dernier sommet utilise un seul sous-polygone.
\\
\\ii) Le coefficient de $X^{n}$ dans la série formelle $(1-P^{-1})^{k}$ est le nombre de $3d$-dissections ouvertes maximales à base ouverte de $\mathcal{P}$ dont le dernier sommet utilise exactement $k$ sous-polygones.

\end{corollary}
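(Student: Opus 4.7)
Le plan est de remarquer que ce corollaire est une conséquence directe de la preuve du théorème \ref{11}, où l'on a introduit et caractérisé la quantité $U_{k,n}$. Rappelons que $U_{k,n}$ a été défini comme le nombre de $3d$-dissections ouvertes maximales à base ouverte de $\mathcal{P}$ (polygone convexe à $n+2$ sommets) dont le dernier sommet est utilisé par exactement $k$ sous-polygones, avec la convention $U_{k,0}:=0$, et que l'on a montré par un raisonnement de produit de Cauchy les identités $U_{1}=1-P^{-1}$ et $U_{k}=U_{1}^{k}=(1-P^{-1})^{k}$ dans $\mathbb{Z}[[X]]$.

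Pour le point ii), il suffit d'extraire le coefficient de $X^{n}$ dans l'égalité de séries formelles $U_{k}(X)=\sum_{n \geq 0} U_{k,n} X^{n}=(1-P^{-1})^{k}$. On obtient immédiatement $[X^{n}](1-P^{-1})^{k}=U_{k,n}$, qui est par définition le nombre cherché.

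Pour le point i), on applique ce qui précède avec $k=1$. La série $1-P^{-1}$ a pour terme constant $1-\tilde{P_{0}}=1-1=0$, ce qui est bien compatible avec la convention $U_{1,0}=0$, et, pour $n \geq 1$, son coefficient en $X^{n}$ vaut $-\tilde{P_{n}}$. Or $U_{1,n}$ est précisément le nombre de $3d$-dissections ouvertes maximales à base ouverte de $\mathcal{P}$ dont le dernier sommet est utilisé par un seul sous-polygone. On obtient donc $-\tilde{P_{n}}=U_{1,n}$, ce qui fournit l'interprétation combinatoire annoncée.

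Il n'y a essentiellement aucune difficulté ici puisque toute la substance est déjà contenue dans le raisonnement menant aux formules $U_{1}=1-P^{-1}$ et $U_{k}=(1-P^{-1})^{k}$ du théorème \ref{11} ; la seule vérification non tautologique est la cohérence des termes constants (qui s'annulent bien conformément à la convention $U_{k,0}=0$), après quoi l'identification coefficient par coefficient des séries est immédiate.
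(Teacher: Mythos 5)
Votre démonstration est correcte et suit exactement la démarche du papier, qui déduit ce corollaire immédiatement de la preuve du théorème \ref{11} via les identités $U_{1}=1-P^{-1}$ et $U_{k}=(1-P^{-1})^{k}$ établies pour les séries génératrices des $3d$-dissections ouvertes maximales à base ouverte comptées selon le nombre de sous-polygones utilisant le dernier sommet. La vérification du terme constant que vous ajoutez est un détail bienvenu mais ne change rien au fond de l'argument.
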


\begin{remark}
{\rm Le corollaire précédent donne une interprétation combinatoire pertinente des coefficients de $1-P^{-1}$. Il serait intéressant d'avoir également une interprétation de la série formelle $1-Q^{-1}$.
}
\end{remark}

\noindent Pour terminer, cette sous-section, on va démontrer le petit résultat ci-dessous :

\begin{proposition}
\label{28}

Soient $n \geq 1$, $k \geq 1$ et $1 \leq i \leq n+2$. Il y a $V_{k,n}$ $\lambda$-quiddités de taille $n+2$ dont la $i^{\grave{e}me}$ composante est égale à $k$.

\end{proposition}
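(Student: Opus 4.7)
The plan is to exploit the invariance of $\lambda$-quiddités under circular permutations, which was recalled at the beginning of subsection \ref{dissec}: if $(a_{1},\ldots,a_{n+2})$ is a solution of $(E_{Id})$, then so is $(a_{\sigma(1)},\ldots,a_{\sigma(n+2)})$ for any cyclic shift $\sigma$. This invariance lets us freely transport a constraint placed at position $i$ to a constraint placed at the final position $n+2$.

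More precisely, I would define the map
\[
\Phi_i : (a_{1},\ldots,a_{n+2}) \longmapsto (a_{i+1},a_{i+2},\ldots,a_{n+2},a_{1},\ldots,a_{i})
\]
(indices read cyclically, so that when $i = n+2$ the map is the identity). By the invariance property, $\Phi_i$ sends $\lambda$-quiddités of size $n+2$ to $\lambda$-quiddités of size $n+2$. Its inverse is the cyclic shift in the opposite direction, so $\Phi_i$ is a bijection on the set of $\lambda$-quiddités of size $n+2$. Furthermore, by construction, the $i$-th component of the original tuple becomes the last component of the image, so $\Phi_i$ restricts to a bijection between the set of $\lambda$-quiddités whose $i$-th component equals $k$ and the set of $\lambda$-quiddités whose last component equals $k$. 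The cardinality of the latter set is $V_{k,n}$ by definition, hence the former set also has cardinality $V_{k,n}$.

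There is no real obstacle here: the only ingredient is the cyclic invariance, which is an immediate and well-known property of solutions of $(E_{Id})$ (it follows at once from the fact that conjugation by a power of a single matrix $M_2(a_j,\ldots)$ preserves equality with $\pm Id$). The proof is therefore essentially a one-line application of this symmetry; no generating function manipulation or combinatorial model is required for the statement, even though the explicit value of $V_{k,n}$ is obtained via Theorem \ref{11}.
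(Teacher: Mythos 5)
Your proof is correct and is essentially identical to the paper's: the paper defines the same cyclic-shift map (its $g_i$ is exactly your $\Phi_i$, together with its explicit inverse $f_i$) and concludes by the same invariance of $\lambda$-quiddités under circular permutations. Nothing further is needed.
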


\begin{proof}

Pour tout $1 \leq j \leq n+2$, on note $\Theta_{n,j}:=\{(a_{1},\ldots,a_{n+2}) \in (\mathbb{N}^{*})^{n+2},~M_{n+2}(a_{1},\ldots,a_{n+2})=\pm Id~{\rm et}~a_{j}=k\}$. On considère les applications suivantes :
\[\begin{array}{ccccc} 
f_{i} : & \Theta_{n,n+2} & \longrightarrow & \Theta_{n,i} \\
  & (a_{1},\ldots,a_{n+2}) & \longmapsto & (a_{n+3-i},\ldots,a_{n+2},a_{1},\ldots,a_{n+2-i})  \\
\end{array},\]
\[\begin{array}{ccccc} 
g_{i} : & \Theta_{n,i} & \longrightarrow & \Theta_{n,n+2} \\
 & (a_{1},\ldots,a_{n+2}) & \longmapsto &  (a_{i+1},\ldots,a_{n+2},a_{1},\ldots,a_{i})  \\
\end{array}.\]

\noindent Les solutions de $(E_{Id})$ étant invariantes par permutations circulaires, $f_{i}$ et $g_{i}$ sont bien définies et sont des bijections réciproques. Donc, ${\rm card}(\Theta_{n,i})={\rm card}(\Theta_{n,n+2})=V_{k,n}$.

\end{proof}

\subsection{Applications numériques}
\label{num}

\hfill\break

\begin{center}
\begin{tabular}{|c|c|c|c|c|c|c|c|c|c|c|c|c|c|}
\hline
  n & 0 & 1 & 2 & 3 & 4 & 5 & 6 &7 & 8 & 9 & 10 & 11 & 12 \\
	\hline
  $\tilde{P_{n}}$ & 1 & -1 & -1 & -2 & -6 & -18 & -57 & -189 & -648 & -2278 & -8166 & -29~737 & -109~701     \rule[-7pt]{0pt}{18pt} \\
	\hline
	\end{tabular}
\end{center}

\hfill\break

\begin{center}
\begin{tabular}{|c|c|c|c|c|c|c|c|c|c|c|c|c|c|}
\hline
  n & 0 & 1 & 2 & 3 & 4 & 5 & 6 &7 & 8 & 9 & 10 & 11 & 12 \\
	\hline
  $U_{2,n}$ & 0 & 0 & 1 & 2 & 5 & 16 & 52 & 174 & 600 & 2118 & 7616 & 27~800 & 102~747    \rule[-7pt]{0pt}{18pt} \\
	\hline
	$U_{3,n}$ & 0 & 0 & 0 & 1 & 3 & 9 & 31 & 108 & 381 & 1367 & 4977 & 18~345 & 68~334    \rule[-7pt]{0pt}{18pt} \\
	\hline
	\end{tabular}
\end{center}

\hfill\break

\noindent Le tableau ci-dessous donne les premières valeurs de $V_{k,n}$.

\hfill\break

\begin{center}
\begin{tabular}{|c|c|c|c|c|c|c|c|c|c|c|c|c|c|}
\hline
  \multicolumn{1}{|c|}{\backslashbox{$k$}{\vrule width 0pt height 1.25em$n$}} & 0 & 1 & 2 & 3 & 4 & 5 & 6 &7 & 8 & 9 & 10 & 11 & 12 \\
	\hline
  1   & 0 & 1 & 1 & 2 & 6 & 19 & 62 & 209 & 726 & 2580 & 9331 & 34~229 & 127~050     \rule[-7pt]{0pt}{18pt} \\
	\hline
  2   & 0 & 0 & 1 & 2 & 5 & 16 & 53 & 180 & 627 & 2232 & 8084 & 29~690 & 110~310     \rule[-7pt]{0pt}{18pt} \\
	\hline
  3   & 0 & 0 & 0 & 1 & 3 & 9 & 31 & 109 & 388 & 1402 & 5136 & 19~035 & 71~247     \rule[-7pt]{0pt}{18pt} \\
	\hline
  4   & 0 & 0 & 0 & 0 & 1 & 4 & 14 & 52 & 194 & 724 & 2716 & 10~254 & 38~955     \rule[-7pt]{0pt}{18pt} \\
	\hline
  5   & 0 & 0 & 0 & 0 & 0 & 1 & 5 & 20 & 80 & 316 & 1235 & 4809 & 18~720     \rule[-7pt]{0pt}{18pt} \\
\hline
  6   & 0 & 0 & 0 & 0 & 0 & 0 & 1 & 6 & 27 & 116 & 484 & 1978 & 7990     \rule[-7pt]{0pt}{18pt} \\
\hline
  7   & 0 & 0 & 0 & 0 & 0 & 0 & 0 & 1 & 7 & 35 & 161 & 708 & 3021     \rule[-7pt]{0pt}{18pt} \\
\hline
  8   & 0 & 0 & 0 & 0 & 0 & 0 & 0 & 0 & 1 & 8 & 44 & 216 & 999     \rule[-7pt]{0pt}{18pt} \\
\hline
  9   & 0 & 0 & 0 & 0 & 0 & 0 & 0 & 0 & 0 & 1 & 9 & 54 & 282     \rule[-7pt]{0pt}{18pt} \\
\hline
  10   & 0 & 0 & 0 & 0 & 0 & 0 & 0 & 0 & 0 & 0 & 1 & 10 & 65     \rule[-7pt]{0pt}{18pt} \\
\hline
  11   & 0 & 0 & 0 & 0 & 0 & 0 & 0 & 0 & 0 & 0 & 0 & 1 & 11     \rule[-7pt]{0pt}{18pt} \\
\hline
  12   & 0 & 0 & 0 & 0 & 0 & 0 & 0 & 0 & 0 & 0 & 0 & 0 & 1     \rule[-7pt]{0pt}{18pt} \\
\hline
	
\end{tabular}
\end{center}

\hfill\break

Notons que, pour tout $n \geq 1$, on retrouve, en sommant toutes les valeurs situées sur la colonne indexée par $n$, la valeur de $Q_{n}$.

\section{Nombre de solutions de $(E_{S})$ et de $(E_{T})$}
\label{SetT} 

\noindent Nous allons maintenant nous intéresser aux cas des générateurs du groupe modulaire.

\subsection{Description combinatoire des solutions}
\label{des}

L'objectif de cette sous-partie est de rappeler les modèles combinatoires dont on dispose pour les solutions de $(E_{S})$ et de $(E_{T})$ et de pointer l'inadéquation de ces derniers pour la résolution du problème de comptage qui nous occupe.

\begin{definition}[\cite{M1}, définition 2.1]
\label{31}

Soient $n \in \mathbb{N}^{*}$, $n \geq 5$ et $\mathcal{P}$ un polygone convexe à $n+1$ sommets. Une 3$d$-dissection échancrée de $\mathcal{P}$ est une décomposition de $\mathcal{P}$ en sous-polygones par des diagonales ne se croisant qu'aux sommets de $\mathcal{P}$ et vérifiant les conditions suivantes : 
\begin{itemize}
\item un seul des sous-polygones est un quadrilatère;
\item tous les autres sous-polygones intervenant dans la décomposition possèdent un nombre de sommets égal à un multiple de $3$;
\item le quadrilatère a exactement deux de ses côtés qui sont des côtés de $\mathcal{P}$ et ces deux côtés possèdent un sommet en commun. 
\end{itemize}
\noindent On numérote ce sommet $0$ puis on numérote les autres sommets dans le sens trigonométrique. Pour chaque $i \in \llbracket 1~;~ n \rrbracket$, on note $a_{i}$ le nombre de sous-polygones possédant un nombre de sommets égal à un multiple de $3$ utilisant le sommet $i$. $(a_{1},\ldots,a_{n})$ est la quiddité de la 3$d$-dissection échancrée de $\mathcal{P}$.

\end{definition}

\begin{definition}[\cite{M1}, définition 2.3]
\label{32}

Soient $n \in \mathbb{N}^{*}$, $n \geq 3$ et $\mathcal{P}$ un polygone convexe à $n+2$ sommets. Une 3$d$-dissection coiffée de $\mathcal{P}$ est une décomposition de $\mathcal{P}$ en sous-polygones par des diagonales ne se croisant qu'aux sommets de $\mathcal{P}$ et vérifiant les conditions suivantes : 
\begin{itemize}
\item tous les sous-polygones intervenant dans la décomposition possèdent un nombre de sommets égal à un multiple de $3$;
\item $\mathcal{P}$ possède un triangle extérieur (c'est-à-dire que deux de ses côtés sont des côtés de $\mathcal{P}$) auquel on affecte le poids -1;
\item tous les autres sous-polygones reçoivent le poids 1;
\item le triangle de poids -1, noté $T$, a un côté en commun avec un triangle de poids 1, noté $T'$, et, un des côtés de $T'$ est un côté de $\mathcal{P}$.
\end{itemize}
\noindent Le sommet appartenant uniquement à $T$ est numéroté $0$ et celui appartenant uniquement à $T$ et à $T'$ est numéroté $-1$. On numérote les autres sommets de telle façon que le sommet numéroté $n$ soit adjacent au sommet numéroté $-1$. Pour chaque $i \in \llbracket 1~;~ n \rrbracket$, on note $a_{i}$ la somme des poids des sous-polygones utilisant le sommet $i$. $(a_{1},\ldots,a_{n})$ est la quiddité de la 3$d$-dissection coiffée de $P$.

\end{definition}

\begin{examples}
{\rm On donne ci-dessous une $3d$-dissection échancrée avec sa quiddité et une $3d$-dissection coiffée avec sa quiddité.}

$$
\shorthandoff{; :!?}
\xymatrix @!0 @R=0.45cm @C=0.45cm
{
&&&1\ar@{-}[rrd]\ar@{-}[lld]
\\
&2\ar@{-}[ldd]\ar@{-}[rrrrrdd]\ar@{-}[rrrr]&&&& 2\ar@{-}[rdd]&\\
\\
\bullet\ar@{-}[rdd]&&&&&& 2\ar@{-}[ldd]\\
\\
&1\ar@{-}[rrrr]\ar@{-}[rrrrruu]&&&&1
}
\qquad
\xymatrix @!0 @R=0.40cm @C=0.5cm
{
&&1\ar@{-}[rrdd]\ar@{-}[lldd]&
\\
&& 1
\\
2\ar@{-}[ddd]\ar@{-}[rrrr]&& && 3\ar@{-}[ddd]
\\
&1
\\
&&& 1
\\
1 \ar@{-}[rrrruuu] \ar@{-}[rrrr] &&&& \bullet
\\
&& -1
\\
&&\bullet\ar@{-}[lluu]\ar@{-}[rruu]
}
$$
\end{examples}

\noindent Ces objets combinatoires sont reliés aux équations $(E_{S})$ et $(E_{T})$ par le résultat suivant.

\begin{theorem}[\cite{M1} Théorèmes 2.2 et 2.4]
\label{33}

i) Soit $n \geq 5$. Tout $n$-uplet d'entiers strictement positifs solution de $(E_{S})$ est la quiddité d'une 3$d$-dissection échancrée d'un polygone convexe à $n+1$ sommets et réciproquement.
\\
\\ii) Soit $n \geq 3$.
\begin{itemize}
\item $(a_{1},\ldots,a_{n-1},1)$ est solution de $(E_{T})$ si et seulement si $(a_{1},\ldots,a_{n-1})$ est la quiddité d'une 3$d$-dissection échancrée d'un polygone convexe à $n$ sommets.
\item $(a_{1},\ldots,a_{n})$ avec $a_{n} \geq 2$ est solution de $(E_{T})$ si et seulement si $(a_{1},\ldots,a_{n})$ est la quiddité d'une 3$d$-dissection coiffée d'un polygone convexe à $n+2$ sommets.
\\
\end{itemize}

\end{theorem}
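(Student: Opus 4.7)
The plan is to reduce each equation $(E_M)$ for $M \in \{S, T\}$ to the $\lambda$-quiddité equation $(E_{Id})$ (classified by Theorem \ref{21bis}) via explicit algebraic identities, and then to translate the resulting $3d$-dissection of an enlarged polygon into the modified échancrée or coiffée structure on a smaller one.

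The first bullet of (ii) is immediate: since $M_n(a_1,\ldots,a_{n-1},1) = M_1(1)\cdot M_{n-1}(a_1,\ldots,a_{n-1})$ and a direct computation gives $M_1(1)^{-1}T = -S$, the equation $M_n(a_1,\ldots,a_{n-1},1) = \pm T$ is equivalent to $M_{n-1}(a_1,\ldots,a_{n-1}) = \mp S$. Once (i) is proved, this bullet follows with the same échancrée of the same $n$-gon serving as combinatorial model.

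For part (i), the key algebraic ingredient is the identity
\[ \begin{pmatrix} b & -1 \\ 1 & 0 \end{pmatrix} \begin{pmatrix} 0 & -1 \\ 1 & 0 \end{pmatrix} \begin{pmatrix} a & -1 \\ 1 & 0 \end{pmatrix} = - \begin{pmatrix} a+b & -1 \\ 1 & 0 \end{pmatrix}, \]
which, combined with cyclic invariance of $(E_{Id})$, should yield that $(a_1,\ldots,a_n)$ solves $(E_S)$ if and only if $(a_1+a_n,\,a_2,\ldots,a_{n-1})$ is a $\lambda$-quiddité of size $n-1$. Indeed $M_{n+1}(a_1,\ldots,a_n,0) = S\cdot M_n(a_1,\ldots,a_n) = \pm S^2 = \mp\mathrm{Id}$, and a cyclic shift to $(a_n,0,a_1,a_2,\ldots,a_{n-1})$ followed by the identity above applied at the initial triple yields $M_{n-1}(a_1+a_n,\,a_2,\ldots,a_{n-1}) = \pm\mathrm{Id}$. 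Theorem \ref{21bis} then produces a $3d$-dissection of an $(n-1)$-gon whose first quiddité value is $c := a_1+a_n \geq 2$, and the $c-1$ ways of splitting $c = a_1+a_n$ with $a_1,a_n \geq 1$ correspond bijectively to échancrée dissections of the $(n+1)$-gon via the following surgery: split the first vertex $v_1'$ of the $(n-1)$-gon into two vertices $v_1, v_n$, insert a new vertex $v_0$ between them, and replace the edge of the dissection incident to $v_1'$ "at the cut" by a quadrilateral $(v_n, v_0, v_1, v_w)$ whose two adjacent boundary edges meet at $v_0$.

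For the second bullet of (ii), using $M_1(1)^3 = -\mathrm{Id}$ and $M_1(a_n) = T\cdot M_1(a_n-1)$ one obtains $M_{n+3}(a_1,\ldots,a_n-1,1,1,1) = -T^{-1}\cdot M_n(a_1,\ldots,a_n)$, hence $(a_1,\ldots,a_n)$ with $a_n \geq 2$ solves $(E_T)$ iff $(a_1,\ldots,a_n-1,1,1,1)$ is a $\lambda$-quiddité of size $n+3$. The trailing entries $(1,1,1)$ force the unique sub-polygon $P$ containing the three "ear" sommets $n+1, n+2, n+3$ to cover four consecutive boundary edges and therefore to have at least $6$ vertices. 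A surgery contracting $P$ and replacing its three ear sommets by the coiffe $(T,T')$ with weights $(-1,+1)$ then yields a coiffée dissection of the $(n+2)$-gon, the weights being calibrated precisely so that the quiddités $(a_1,\ldots,a_n)$ are preserved (the contribution of $P$ to sommet $1$ is replaced by the net contribution $+1-1+1$ of $(T',T,T')$, and similarly at the boundary with sommet $n$). The principal obstacle in both (i) and (ii.b) is verifying that these combinatorial surgeries are bijective and preserve quiddités: the algebraic reductions are routine, but matching the combinatorial freedom (splittings $c=a_1+a_n$ in (i), residual hexagonal or larger $P$ in (ii.b)) with the échancrée/coiffée structures is the true content of the theorem.
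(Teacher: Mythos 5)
The paper does not actually prove this statement: it is imported wholesale from \cite{M1} (Théorèmes 2.2 et 2.4), so there is no internal proof to compare against. That said, your strategy --- reduce $(E_{S})$ and $(E_{T})$ to $(E_{Id})$ by explicit matrix identities, then convert the resulting $3d$-dissection into the échancrée or coiffée structure by a local surgery --- is the right one, and your algebraic reductions are exactly those the paper itself performs later: the equivalence between $M_{n}(a_{1},\ldots,a_{n})=\pm S$ and ``$(a_{2},\ldots,a_{n-1},a_{1}+a_{n})$ est une $\lambda$-quiddité de taille $n-1$'' (via $M_{3}(a,0,b)=-M_{1}(a+b)$ and l'invariance par permutations circulaires) reappears verbatim in the proof of the théorème \ref{12}~i), and $M_{1}(1)^{-1}T=-S$, i.e. $(a_{1},\ldots,a_{n-1},1)$ solution de $(E_{T})$ ssi $(a_{1},\ldots,a_{n-1})$ solution de $(E_{S})$, is the computation behind \ref{12}~ii). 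Your surgery for (i) is also the expected one: the $c-1$ diagonales issues du sommet de quiddité $c=a_{1}+a_{n}$ correspond exactly to the $c-1$ splittings of $c$, and cutting along the chosen one while inserting the vertex $0$ produces precisely the quadrilateral required by la définition \ref{31}.

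Two caveats. First, for the second point of (ii) your detour through $(a_{1},\ldots,a_{n}-1,1,1,1)$ and an $(n+3)$-gone works (the sub-polygone $P$ containing the three ``ears'' does have at least $6$ sommets and can be truncated), but it is heavier than necessary: the one-line reduction $T^{-1}M_{n}(a_{1},\ldots,a_{n})=M_{n}(a_{1},\ldots,a_{n}-1)$, which the paper uses in \ref{12}~ii), identifies these solutions with the $\lambda$-quiddités $(a_{1},\ldots,a_{n}-1)$ de taille $n$, and the coiffe is then obtained simply by gluing the two triangles $T=(-1,0,1)$ (poids $-1$) and $T'=(n,-1,1)$ (poids $1$) onto a $3d$-dissection of the $n$-gone: the weights leave $a_{1}$ unchanged ($-1+1=0$) and raise the last entry from $a_{n}-1$ to $a_{n}$. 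Your parenthetical ``$+1-1+1$ de $(T',T,T')$'' is garbled as written; the correct bookkeeping at sommet $1$ is $w(T)+w(T')+w(P')=-1+1+1=+1$, matching the $+1$ that $P$ contributed, while at sommet $n$ it is $w(T')+w(P')=+2$ against $P$'s $+1$, which accounts for the shift $a_{n}-1\mapsto a_{n}$. Second, you overstate the remaining difficulty: since le théorème \ref{21bis} only asserts that each $\lambda$-quiddité is the quiddity of \emph{some} dissection, you do not need your surgeries to be bijections on sets of dissections --- it suffices that the expansion and the contraction are each well defined and quiddity-preserving, which is a routine verification once the local pictures above are drawn.
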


Notons que contrairement aux solutions de $(E_{Id})$, les solutions de $(E_{S})$ et de $(E_{T})$ ne sont pas invariantes par permutations circulaires. Par ailleurs, les solutions de $(E_{S})$ sont invariantes par retournements mais celles de $(E_{T})$ ne le sont pas. On notera également que $(E_{S})$ n'a pas de solution de taille inférieure à 4 et que $(E_{T})$ n'a pas de solution de taille inférieure à 2. Ainsi, $\mathcal{S}_{0}=\mathcal{S}_{1}=\mathcal{S}_{2}=0$ et $\mathcal{T}_{0}=0$.
\\
\\ \indent Soient $n \geq 3$, $E_{n}$ le nombre de $3d$-dissections échancrées d'un polygone convexe à $n+2$ sommets (avec $E_{3}=0$) et $F_{n}$ le nombre de $3d$-dissections coiffées d'un polygone convexe à $n+2$ sommets. À partir de la formule de $D_{n}$ (voir section \ref{dissec}), qui donne le nombre de $3d$-dissections d'un polygone convexe à $n+2$ sommets, on peut assez facilement obtenir une expression de $E_{n}$ et de $F_{n}$. Soit $\mathcal{P}$ un polygone convexe à $n+2$ sommets. Pour construire une $3d$-dissection échancrée de $\mathcal{P}$, on commence par choisir le sommet 0, ce qui nous fait $n+2$ choix. Ensuite, on numérote les autres sommets et on choisit le sommet $3 \leq k \leq n-1$ qui permet de tracer le quadrilatère de sommets $0,1,k,n+1$. Pour chaque $k$, on effectue une $3d$-dissection des polygones de sommets $1,2,\ldots,k$ et $k,k+1,\ldots,n+1$, ce qui donne $D_{k-2}D_{n-k}$ possibilités. Ainsi, $E_{n}=(n+2)\sum_{k=3}^{n-1} D_{k-2}D_{n-k}$. Pour construire une $3d$-dissection coiffée de $\mathcal{P}$, on choisit un quadrilatère extérieure de $\mathcal{P}$, ce qui fait $n+2$ choix, et on coupe celui-ci en deux, comme dans la figure ci-dessous : 

$$
\shorthandoff{; :!?}
\xymatrix @!0 @R=0.6cm @C=0.6cm
{
&\bullet\ar@{-}[rr]\ar@{-}[ldd]&&\bullet\ar@{-}[rdd]\ar@{-}[llldd]
\\&-1&&1
\\
\bullet\ar@{-}[rrrr]&&&&\bullet
}
\qquad
\xymatrix @!0 @R=0.6cm @C=0.6cm
{
&\bullet\ar@{-}[rr]\ar@{-}[ldd]\ar@{-}[rrrdd]&&\bullet\ar@{-}[rdd]
\\&1&&-1
\\
\bullet\ar@{-}[rrrr]&&&&\bullet
}
$$

\noindent Ensuite, on effectue une $3d$-dissection du polygone obtenu en supprimant le quadrilatère de $\mathcal{P}$. Cela donne $F_{n}=2(n+2)D_{n-2}$.

\hfill\break

\begin{center}
\begin{tabular}{|c|c|c|c|c|c|c|c|c|c|c|}
\hline
  n & 3 & 4 & 5 & 6 & 7 & 8 & 9 & 10 & 11 & 12 \\
	\hline
  $D_{n}$ & 5 & 15 & 49 & 168 & 595 & 2160 & 7997 & 30~083 & 114~660 & 441~840       \rule[-7pt]{0pt}{18pt} \\
	\hline
	$E_{n}$ & 0 & 6 & 28 & 112 & 450 & 1830 & 7502 & 30~924 & 128~050 & 532~350       \rule[-7pt]{0pt}{18pt} \\
	\hline
	$F_{n}$ & 10 & 24 & 70 & 240 & 882 & 3360 & 13~090 & 51~840 & 207~922 & 842~324      \rule[-7pt]{0pt}{18pt} \\
	\hline
	\end{tabular}
\end{center}

\hfill\break

Malheureusement, on ne peut pas utiliser ces descriptions combinatoires pour compter le nombre de solutions de $(E_{S})$ et de $(E_{T})$. En effet, puisque deux $3d$-dissections différentes peuvent avoir la même quiddité, il en va de même pour les $3d$-dissections échancrées ou coiffées. Par ailleurs, la situation dans le cas échancré est encore plus complexe que celle rencontrée dans la section précédente car il est possible d'avoir deux $3d$-dissections échancrées d'un même polygone, avec le sommet 0 placé à la même position, donnant la même quiddité alors que leur quadrilatère sont positionnés de façon différente, comme l'illustre les deux décompositions de l'hendécagone ci-dessous :

$$
\shorthandoff{; :!?}
\xymatrix @!0 @R=0.5cm @C=0.7cm
{
&&&\bullet\ar@{-}[lld]\ar@{-}[rrd]
\\
&1\ar@{-}[ldd]\ar@{-}[rrrrdddddd]&&&& 2\ar@{-}[rdd]\ar@{-}[rdddd]\ar@{-}[dddddd]\\
\\
1\ar@{-}[dd]&&&&&&1\ar@{-}[dd]\\
\\
2\ar@{-}[rdd]\ar@{-}[rrddd]&&&&&&2\ar@{-}[ldd]\\
\\
&1\ar@{-}[rd]\ar@{-}&&&&2 \ar@{-}[ld]\\
&&2\ar@{-}[rr]&&1&
}
\qquad
\xymatrix @!0 @R=0.5cm @C=0.7cm
{
&&&\bullet\ar@{-}[lld]\ar@{-}[rrd]
\\
&1\ar@{-}[ldd]\ar@{-}[ldddd]&&&& 2\ar@{-}[rdd]\ar@{-}[rdddd]\ar@{-}[llllldddd]\\
\\
1\ar@{-}[dd]&&&&&&1\ar@{-}[dd]\\
\\
2\ar@{-}[rdd]&&&&&&2\ar@{-}[ldd]\\
\\
&1\ar@{-}[rd]\ar@{-}&&&&2 \ar@{-}[ld]\ar@{-}[llld]\\
&&2\ar@{-}[rr]&&1&
}
$$

Par conséquent, pour effectuer le comptage souhaité, on n'utilisera pas les descriptions combinatoires présentées ici.

\subsection{Preuve des formules de comptage} L'objectif de cette sous-partie est d'utiliser les résultats prouvés dans la section \ref{DCQ} pour pouvoir dénombrer les solutions de $(E_{S})$ et de $(E_{T})$.

\begin{proof}[Démonstration du théorème \ref{12}]

i) Soient $n \geq 3$, $(a_{1},\ldots,a_{n+2}) \in (\mathbb{N}^{*})^{n+2}$ et $\epsilon \in \{-1,1\}$. On a 
\begin{eqnarray*}
M_{n+2}(a_{1},\ldots,a_{n+2})=\epsilon S & \Longleftrightarrow & SM_{n+2}(a_{1},\ldots,a_{n+2})=-\epsilon Id \\
                                         & \Longleftrightarrow & T^{0}SM_{n+2}(a_{1},\ldots,a_{n+2})=-\epsilon Id \\
                                         & \Longleftrightarrow & M_{n+3}(a_{1},\ldots,a_{n+2},0)=-\epsilon Id \\
												                 & \Longleftrightarrow & M_{n+3}(a_{2},\ldots,a_{n+2},0,a_{1})=-\epsilon Id \\
																			   & \Longleftrightarrow & M_{n+2}(a_{2},\ldots,a_{1}+a_{n+2})=\epsilon Id. \\
\end{eqnarray*}

\noindent La dernière équivalence provient de l'égalité $M_{3}(a,0,b)=-M_{1}(a+b)$. Par conséquent, $(a_{1},\ldots,a_{n+2})$ est une solution de $(E_{S})$ si et seulement si $(a_{2},\ldots,a_{n+1},a_{1}+a_{n+2})$ est une $\lambda$-quiddité.
\\
\\De plus, si $(a_{1},\ldots,a_{n+2}) \in (\mathbb{N}^{*})^{n+2}$ est une solution de $(E_{S})$, on a $2 \leq a_{1}+a_{n+2} \leq n-1$. En effet, par ce qui précède, $(a_{2},\ldots,a_{n+1}, a_{1}+a_{n+2})$ est une $\lambda$-quiddité de taille $n+1$. Or, par le théorème \ref{21bis}, cette $\lambda$-quiddité est la quiddité d'une $3d$-dissection d'un polygone convexe $\mathcal{P}$ à $n+1$ sommets. Comme chaque sommet de $\mathcal{P}$ ne peut être utilisé que par au plus $n-1$ sous-polygones, on a nécessairement $a_{1}+a_{n+2} \leq n-1$. Par ailleurs, $a_{1},a_{n+2} \geq 1$. Par conséquent, on a bien l'encadrement souhaité.
\\
\\Pour compter le nombre de solutions de $(E_{S})$, on va donc s'intéresser aux solutions dont la somme du premier et du dernier terme est fixée. Plus précisément, on note $X_{n}$ l'ensemble de solutions de $(E_{S})$ de taille $n+2$ et on pose pour tout $2 \leq d \leq n-1$ et pour tout $1 \leq k \leq d-1$, $Y_{n,d}:=\{(a_{1},\ldots,a_{n+2}) \in X_{n},~a_{1}+a_{n+2}=d\}$ et $Z_{n,d,k}:=\{(a_{1},\ldots,a_{n+2}) \in X_{n},~a_{1}=k~{\rm et}~a_{n+2}=k-d\}$. Par ce qui précède, on a :
\[X_{n}=\bigsqcup_{d=2}^{n-1} Y_{n,d}=\bigsqcup_{d=2}^{n-1} \bigsqcup_{k=1}^{d-1} Z_{n,d,k}.\]
\noindent Par conséquent, \[\mathcal{S}_{n}=\sum_{d=2}^{n-1} \sum_{k=1}^{d-1} {\rm card}(Z_{n,d,k}).\]

\noindent On va donc chercher à calculer ${\rm card}(Z_{n,d,k})$. Commençons par remarquer que cet ensemble est non vide pour tout $2 \leq d \leq n-1$ et pour tout $1 \leq k \leq d-1$. En effet, notons $O_{n,d}$ l'ensemble des $\lambda$-quiddités de taille $n+2$ dont la dernière composante est égale à $d$. On sait que $O_{n-1,d} \neq \emptyset$ (voir proposition \ref{26bis}). Soit $(a_{1},\ldots,a_{n},k+(d-k)) \in O_{n-1,d}$. Par ce qui précède, on a $(k,a_{1},\ldots,a_{n},d-k) \in Z_{n,d,k}$. Considérons maintenant les deux applications ci-dessous :
\[\begin{array}{ccccc} 
\alpha_{n,d,k} : & Z_{n,d,k} & \longrightarrow & O_{n-1,d} \\
  & (k,a_{2},\ldots,a_{n+1},d-k) & \longmapsto & (a_{2},\ldots,a_{n+1},d)  \\
\end{array},\] \[\begin{array}{ccccc} 
\beta_{n,d,k} : & O_{n-1,d} & \longrightarrow & Z_{n,d,k} \\
 & (a_{1},\ldots,a_{n},d) & \longmapsto &  (k,a_{1},\ldots,a_{n},d-k)  \\
\end{array}.\]

\noindent $\alpha_{n,d,k}$ et $\beta_{n,d,k}$ sont bien définies et sont des bijections réciproques. Par conséquent, 
\[{\rm card}(Z_{n,d,k})={\rm card}(O_{n-1,d})=V_{d,n-1}.\] 
\noindent Donc, pour tout $n \geq 3$, on a :
\[\mathcal{S}_{n}=\sum_{d=2}^{n-1} \sum_{k=1}^{d-1} V_{d,n-1}=\sum_{d=2}^{n-1} (d-1) V_{d,n-1}.\]

\noindent ii) Soient $n \geq 1$, $(a_{1},\ldots,a_{n+2}) \in (\mathbb{N}^{*})^{n+2}$ et $\epsilon \in \{-1,1\}$. On suppose d'abord $a_{n+2}=1$. On a :
\begin{eqnarray*}
M_{n+2}(a_{1},\ldots,a_{n+1},1)=\epsilon T & \Longleftrightarrow & T^{-1}M_{n+2}(a_{1},\ldots,a_{n+1},1)=\epsilon Id \\
                                           & \Longleftrightarrow & M_{n+2}(a_{1},\ldots,a_{n+1},0)=\epsilon Id \\
																			     & \Longleftrightarrow & M_{n+1}(a_{1},\ldots,a_{n+1})=-\epsilon S. \\
\end{eqnarray*}

\noindent Ainsi, il y a $\mathcal{S}_{n-1}$ solutions de $(E_{T})$ de taille $n+2$ de la forme $(a_{1},\ldots,a_{n+1},1)$.
\\
\\On suppose maintenant $a_{n+2} \geq 2$. On a :

\begin{eqnarray*}
M_{n+2}(a_{1},\ldots,a_{n+2})=\epsilon T & \Longleftrightarrow & T^{-1}M_{n+2}(a_{1},\ldots,a_{n+2})=\epsilon Id. \\
                                         & \Longleftrightarrow & M_{n+2}(a_{1},\ldots,a_{n+2}-1)=\epsilon Id. \\
\end{eqnarray*}

\noindent Par conséquent, il y a $Q_{n}$ solutions de $(E_{T})$ de taille $n+2$ de la forme $(a_{1},\ldots,a_{n+2})$ avec $a_{n+2} \geq 2$.
\\
\\Ainsi, pour tout $n \geq 1$, $\mathcal{T}_{n}=\mathcal{S}_{n-1}+Q_{n}$.

\end{proof}

\begin{remark}
{\rm À l'instar de ce qui a été fait pour les $\lambda$-quiddités, on peut s'intéresser aux solutions de $(E_{S})$ et de $(E_{T})$ dont la dernière composante est fixée. La preuve développée ci-dessus et les résultats de la section précédente permettent de résoudre rapidement cette question. 
\begin{itemize}
\item La dernière composante d'une solution de $(E_{S})$ de taille $n+2$ est comprise entre $1$ et $n-2$. Pour tout $1 \leq d \leq n-2$, l'ensemble des solutions de $(E_{S})$ de taille $n+2$ dont la dernière composante est égale à $d$ est en bijection avec l'ensemble des solutions de $(E_{Id})$ de taille $n+1$ dont la dernière composante est strictement supérieure à $d$. Aussi, il y a $\sum_{k=d+1}^{n-1} V_{k,n-1}$ solutions de $(E_{S})$ de taille $n+2$ dont la dernière composante est égale à $d$. 
\item La dernière composante d'une solution de $(E_{T})$ de taille $n+2$ est comprise entre $1$ et $n+1$. Il y a $\mathcal{S}_{n-1}$ solutions de $(E_{T})$ de taille $n+2$ dont la dernière composante est égale à 1. Pour tout $2 \leq d \leq n+1$, il y a $V_{d-1,n}$ solutions de $(E_{T})$ de taille $n+2$ dont la dernière composante est égale à d. 
\end{itemize}
}
\end{remark}

\subsection{Extension à d'autres matrices}
\label{autre}

Les techniques mises en œuvre pour démontrer les formules de comptage pour $S$ et $T$ peuvent être adaptées pour d'autres éléments du groupe modulaire parfois utilisés comme générateur. Grâce à celles-ci, on va démontrer le résultat ci-dessous :

\begin{theorem}
\label{34}

Soit $n \geq 1$. On note $u_{n}$ le nombre de solutions de taille $n+2$ de $(E_{T^{-1}})$, $v_{n}$ le nombre de solutions de taille $n+2$ de $(E_{TS})$, $w_{n}$ le nombre de solutions de taille $n+2$ de $(E_{ST})$, $x_{n}$ le le nombre de solutions de taille $n+2$ de $(E_{TSTS})$ et $y_{n}$ le nombre de solutions de taille $n+2$ de $(E_{STST})$. On a :
\\
\\i) $u_{n}=Q_{n}-V_{1,n}$.
\\
\\ii) $v_{1}=0$ et, pour $n \geq 2$, $v_{n}=Q_{n-1}+\mathcal{S}_{n}$.
\\
\\iii) $w_{n}=Q_{n}-2 \left(\sum_{k=1}^{n} W_{1,k,n}\right)+W_{1,1,n}$, avec $W_{1,1,0}=0$, $W_{1,1,1}=1$ et $W_{1,1}(X)=\sum_{n=0}^{+\infty} W_{1,1,n} X^{n}=X+\sum_{n=2}^{+\infty}\left(\sum_{j=1}^{E\left[\frac{n-1}{3}\right]} \sum_{k=0}^{E\left[\frac{n-3j-1}{3}\right]} \frac{3j}{n-1-k} {n-3j-2k-2 \choose k}{2n-3j-4k-3 \choose n-3j-3k-1}\right) X^{n}$ et, pour $k \geq 1$, $\sum_{n=0}^{+\infty} W_{1,k,n} X^{n}=W_{1,1}(X)(1-P^{-1}(X))^{k-1}$.
\\
\\iv) $x_{n}=V_{1,n+1}$.
\\
\\v) $y_{1}=y_{2}=y_{3}=0$ et, pour $n \geq 4$, $y_{n}=\sum_{d=3}^{n-1} (d-2) V_{d,n-1}$.

\end{theorem}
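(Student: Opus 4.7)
Le plan est de ramener chaque équation $(E_M)$ à une équation déjà dénombrée par les théorèmes \ref{11} et \ref{12}, au moyen de manipulations algébriques sur le produit $M_{n+2}(a_1,\ldots,a_{n+2})$. Pour (i), la multiplication à gauche par $T$ donne $T M_{n+2}(a_1,\ldots,a_{n+2})=M_{n+2}(a_1,\ldots,a_{n+1},a_{n+2}+1)$, donc les solutions de $(E_{T^{-1}})$ sont en bijection avec les $\lambda$-quiddités de taille $n+2$ dont la dernière composante est $\geq 2$, d'où $u_n=Q_n-V_{1,n}$. Pour (ii), on multiplie à gauche par $T^{-1}$ et on discute selon $a_{n+2}\geq 2$ (ce qui se ramène à $(E_S)$ de taille $n+2$, contribution $\mathcal{S}_n$) ou $a_{n+2}=1$ (via $T^{-1}\cdot TS=S$, on obtient $M_{n+1}(a_1,\ldots,a_{n+1})=\pm Id$, contribution $Q_{n-1}$ dès que $n\geq 2$ ; les deux sous-cas sont vides pour $n=1$). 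Pour (iv), $(TS)^3=-Id$ entraîne $(TSTS)^{-1}=-TS=-M_1(1)$, si bien que l'équation se réécrit $M_{n+3}(a_1,\ldots,a_{n+2},1)=\pm Id$ et $x_n=V_{1,n+1}$. Pour (v), la multiplication à droite par $ST$ combinée à $S\cdot ST=-T$ donne $M_{n+2}(a_1,\ldots,a_{n+2})\cdot ST=M_{n+2}(a_1+1,a_2,\ldots,a_{n+2})\cdot S$ ; comme $(ST)^3=-Id$, l'équation devient $M_{n+2}(a_1+1,a_2,\ldots,a_{n+2})=\epsilon S$, c'est-à-dire une solution de $(E_S)$ de première composante $\geq 2$, et en raffinant par $k=a_1\geq 2$ la somme du théorème \ref{12}(i) on obtient $y_n=\sum_{d=3}^{n-1}(d-2)V_{d,n-1}$ (sommes vides pour $n\leq 3$).

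La partie (iii) est la plus technique. Le point de départ est l'identité $ST^{-1}=(TS)^2=M_1(1)^2$ : en multipliant $(E_{ST})$ à droite par $T^{-1}$ et en télescopant $T^{a_1}(TS)(TS)=M_1(a_1+1)M_1(1)$, on obtient
\[M_{n+2}(a_1,\ldots,a_{n+2})T^{-1}=M_{n+3}(1,a_1+1,a_2,\ldots,a_{n+2}),\]
si bien que l'équation devient $M_{n+3}(1,a_1+1,a_2,\ldots,a_{n+2})=\epsilon S$. En appliquant la bijection de la preuve du théorème \ref{12}(i), cela équivaut à ce que $(a_1+1,a_2,\ldots,a_{n+1},a_{n+2}+1)$ soit une $\lambda$-quiddité de taille $n+2$. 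L'application $(a_1,\ldots,a_{n+2})\mapsto(a_1+1,a_2,\ldots,a_{n+1},a_{n+2}+1)$ est une bijection entre les solutions de $(E_{ST})$ et les $\lambda$-quiddités de taille $n+2$ dont les première et dernière composantes sont toutes deux $\geq 2$. On définit alors $W_{1,k,n}$ comme le nombre de $\lambda$-quiddités de taille $n+2$ de première composante $1$ et de dernière composante $k$ ; par la proposition \ref{28}, on a $|\{\text{première}=1\}|=|\{\text{dernière}=1\}|=V_{1,n}=\sum_k W_{1,k,n}$, et l'inclusion-exclusion donne $w_n=Q_n-2\sum_k W_{1,k,n}+W_{1,1,n}$.

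Pour la formule explicite de $W_{1,1,n}$, on utilise le théorème \ref{23} : $W_{1,1,n}$ compte les dissections ouvertes maximales d'un $(n+2)$-gone dans lesquelles les sommets $1$ et $n+2$ appartiennent chacun à un unique sous-polygone, ce qui force $\mathcal{P}_b$ à contenir les côtés $1$-$2$ et $(n+1)$-$(n+2)$. En paramétrant les autres sommets de $\mathcal{P}_b$ par $\{w_1,\ldots,w_m\}\subset\{3,\ldots,n\}$ (avec la contrainte $3\mid m+4$, soit $m=3j-1$) et en dénombrant les dissections ouvertes maximales à base ouverte des sous-régions associées aux écarts entre sommets consécutifs de $\mathcal{P}_b$, on arrive, après la substitution $h_\ell=g_\ell-1$, à l'identité compacte $W_{1,1}(X)=\sum_{j\geq 0}X^{3j+1}P(X)^{3j}=(Q(X)-1)P(X)^{-2}$, dont l'expansion par la proposition \ref{26} fournit la formule binomiale de l'énoncé (le terme $X$ correspond au cas dégénéré $n=1$ où $\mathcal{P}_b$ est le triangle entier).

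Enfin, pour la formule multiplicative $W_{1,k}(X)=W_{1,1}(X)(1-P^{-1}(X))^{k-1}$, on adapte la décomposition de la preuve du théorème \ref{11} : en découpant au sommet $i\leq n+1$ de plus grand indice de $\mathcal{P}_b$, le morceau de gauche $(1,2,\ldots,i,n+2)$ est astreint à une configuration $W_{1,1}$ (la contrainte sur le sommet $1$ se transfère automatiquement au morceau de gauche, car le sommet $1$ n'appartient pas au morceau de droite) tandis que le morceau de droite $(i,i+1,\ldots,n+2)$ est une dissection ouverte maximale à base ouverte dont le dernier sommet est utilisé par $k-1$ sous-polygones ; le produit de Cauchy donne $W_{1,k,n}=\sum_i W_{1,1,i-1}\,U_{k-1,n+1-i}$, soit $W_{1,k}=W_{1,1}\cdot U_{k-1}=W_{1,1}(1-P^{-1})^{k-1}$. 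La principale difficulté sera la vérification soigneuse de cette dernière décomposition, notamment de s'assurer que la contrainte « sommet $n+2$ dans $k$ sous-polygones » se répartit correctement entre les deux morceaux (contribution $1$ du gauche, $k-1$ du droit) sans double-comptage, en suivant précisément la même analyse que celle menée pour $V_k$ dans la preuve du théorème \ref{11}.
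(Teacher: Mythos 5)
Votre démonstration est correcte et repose sur la même architecture que celle de l'article : réductions algébriques de chaque équation $(E_{M})$ vers $(E_{Id})$ ou $(E_{S})$, puis dénombrement via les dissections ouvertes maximales et les séries $P$, $Q$, $U_{k}$. Trois étapes empruntent toutefois un chemin sensiblement différent. Pour iii), vous exploitez l'identité $ST^{-1}=(TS)^{2}$ pour transformer $(E_{ST})$ en une solution de $(E_{S})$ de taille $n+3$ de première composante $1$, puis vous invoquez la bijection de la preuve du théorème \ref{12} ; l'article procède par une longue chaîne directe d'équivalences menant à $M_{n+2}(a_{1}+1,a_{2},\ldots,a_{n+2}+1)=\epsilon\, Id$. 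Les deux aboutissent à la même caractérisation (première et dernière composantes $\geq 2$), et votre variante a l'avantage de recycler un résultat déjà démontré. De même, pour v), vous vous arrêtez à \og solution de $(E_{S})$ de première composante $\geq 2$ \fg{} et réutilisez la partition en $Z_{n,d,k}$ de la preuve du théorème \ref{12}, là où l'article refait la réduction complète vers $(E_{Id})$ ; le décompte $\sum_{d=3}^{n-1}(d-2)V_{d,n-1}$ est identique. Enfin, pour $W_{1,1}$, vous obtenez $\sum_{j\geq 0}X^{3j+1}P(X)^{3j}$ en énumérant directement les sous-polygones de base admissibles (ceux contenant les côtés joignant $1$ à $2$ et $n+1$ à $n+2$, ce qui est bien équivalent à la condition \og un seul sous-polygone aux sommets $1$ et $n+2$ \fg), alors que l'article établit d'abord la factorisation $Q-1=P\,W_{1,1}\,P$ par une double coupure du sous-polygone de base puis divise par $P^{2}$ ; votre paramétrisation (nombre d'intervalles $3j$, somme des indices des facteurs $P$ égale à $n-3j-1$) redonne bien la même série. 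Le reste --- i), ii), iv), l'inclusion-exclusion pour $w_{n}$ et la relation $W_{1,k}=W_{1,1}(1-P^{-1})^{k-1}$ obtenue par coupure au dernier sommet du sous-polygone de base --- coïncide avec l'article.
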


\begin{remarks}
{\rm i) On a $(TS)^{3}=(ST)^{3}=-Id$.
\\
\\ii) $(E_{T^{-1}})$ n'a pas de solution de taille 1, 2 ou 3. $(1)$ est l'unique solution de $(E_{TS})$ de taille 1 et n'a pas de solution de taille 2 ou 3. $(E_{ST})$ et $(E_{STST})$ n'ont pas de solution de taille 1, 2 ou 3. $(E_{TSTS})$ n'a pas de solution de taille 1 mais possède une solution de taille 2, $(1,1)$, et une solution de taille 3, $(2,1,2)$.
}
\end{remarks}

\begin{proof}

i) Soient $n \geq 1$, $(a_{1},\ldots,a_{n+2}) \in (\mathbb{N}^{*})^{n+2}$ et $\epsilon \in \{-1,1\}$. On a 
\begin{eqnarray*}
M_{n+2}(a_{1},\ldots,a_{n+2})=\epsilon T^{-1} & \Longleftrightarrow & TM_{n+2}(a_{1},\ldots,a_{n+2})=\epsilon Id \\
                                              & \Longleftrightarrow & M_{n+2}(a_{1},\ldots,a_{n+2}+1)=\epsilon Id. \\
\end{eqnarray*}

\noindent Par conséquent, l'ensemble des solutions de taille $n+2$ de $(E_{T^{-1}})$ est en bijection avec l'ensemble des solutions de taille $n+2$ de $(E_{Id})$ dont la dernière composante est supérieure ou égale à 2. Ainsi, $u_{n}=Q_{n}-V_{1,n}$.
\\
\\ii) Soient $n \geq 2$, $(a_{1},\ldots,a_{n+2}) \in (\mathbb{N}^{*})^{n+2}$ et $\epsilon \in \{-1,1\}$. On suppose pour commencer que $a_{n+2}=1$. 
\begin{eqnarray*}
M_{n+2}(a_{1},\ldots,a_{n+1},1)=\epsilon TS & \Longleftrightarrow & TS M_{n+1}(a_{1},\ldots,a_{n+1})=\epsilon TS \\
                                            & \Longleftrightarrow & M_{n+1}(a_{1},\ldots,a_{n+1})=\epsilon Id. \\
\end{eqnarray*}

\noindent Par conséquent, il y a $Q_{n-1}$ solutions de $(E_{TS})$ de taille $n+2$ de la forme $(a_{1},\ldots,a_{n+1},1)$. On suppose maintenant $a_{n+2}>1$.
\begin{eqnarray*}
M_{n+2}(a_{1},\ldots,a_{n+2})=\epsilon TS & \Longleftrightarrow & T^{-1}M_{n+2}(a_{1},\ldots,a_{n+2})=\epsilon S \\
                                          & \Longleftrightarrow & M_{n+2}(a_{1},\ldots,a_{n+2}-1)=\epsilon S. \\
\end{eqnarray*}

\noindent Par conséquent, il y a une bijection entre l'ensemble des solutions de $(E_{TS})$ de taille $n+2$ de la forme $(a_{1},\ldots,a_{n+1},a_{n+2})$ avec $a_{n+2}>1$ et l'ensemble des solutions de $(E_{S})$ de taille $n+2$. Donc, il y a $\mathcal{S}_{n}$ solutions de $(E_{TS})$ de cette forme.
\\
\\Ainsi, $w_{n}=Q_{n-1}+\mathcal{S}_{n}$.
\\
\\iii) Soient $n \geq 1$, $(a_{1},\ldots,a_{n+2}) \in (\mathbb{N}^{*})^{n+2}$ et $\epsilon \in \{-1,1\}$. On a :
\begin{eqnarray*}
M_{n+2}(a_{1},\ldots,a_{n+2})=\epsilon ST & \Longleftrightarrow & M_{n+1}(a_{2},\ldots,a_{n+2})T^{a_{1}}S(STST)=\epsilon ST(STST) \\
                                          & \Longleftrightarrow & -M_{n+1}(a_{2},\ldots,a_{n+2})T^{a_{1}}TST=-\epsilon Id \\
																					& \Longleftrightarrow & M_{n+1}(a_{2},\ldots,a_{n+2})T^{a_{1}+1}ST=\epsilon Id \\
																					& \Longleftrightarrow & SM_{n+1}(a_{2},\ldots,a_{n+2})T^{a_{1}+1}STS=\epsilon S \times S \\
																					& \Longleftrightarrow & T^{0}SM_{n+1}(a_{2},\ldots,a_{n+2})T^{a_{1}+1}STS=-\epsilon Id \\
																					& \Longleftrightarrow & M_{n+4}(1,a_{1}+1,a_{2},\ldots,a_{n+2},0)=-\epsilon Id \\
																					& \Longleftrightarrow & M_{n+4}(a_{1}+1,a_{2},\ldots,a_{n+2},0,1)=-\epsilon Id \\
																					& \Longleftrightarrow & -M_{n+2}(a_{1}+1,a_{2},\ldots,a_{n+2}+1)=-\epsilon Id \\
																					& \Longleftrightarrow & M_{n+2}(a_{1}+1,a_{2},\ldots,a_{n+2}+1)=\epsilon Id. \\
\end{eqnarray*}

\noindent On en déduit que $w_{n}$ est égal au nombre de $\lambda$-quiddités de taille $n+2$ dont la première et la dernière composante sont différentes de 1. Pour pouvoir compter concrètement les solutions de $(E_{ST})$, on doit donc dénombrer les solutions de $(E_{Id})$ vérifiant la condition souhaitée. Pour cela, on note pour, $n \geq 1$ et $k,l \geq 1$, $W_{k,l,n}$ le nombre de $\lambda$-quiddités de taille $n+2$ dont la première composante vaut $k$ et la dernière composante est égale à $l$. Comme les solutions de $(E_{Id})$ sont invariantes par retournements, on a $W_{l,k,n}=W_{k,l,n}$. De plus, les composantes d'une solution de $(E_{Id})$ de taille $n+2$ sont comprises entre $1$ et $n$. Il y a donc 
\[2 \left(\sum_{k=1}^{n} W_{1,k,n}\right)-W_{1,1,n}\]
 $\lambda$-quiddités de taille $n+2$ dont la première ou la dernière composante sont égales 1. Ainsi,
\[w_{n}=Q_{n}-2 \left(\sum_{k=1}^{n} W_{1,k,n}\right)+W_{1,1,n}.\]

\noindent On pose maintenant $W_{k,l,0}:=0$ et $W_{k,l}(X):=\sum_{n=0}^{+\infty} W_{k,l,n} X^{n}$. Soient $n \geq 1$, $k,l \geq 1$ et $\mathcal{P}$ un polygone convexe à $n+2$ sommets, ces derniers étant numérotés de 1 à $n+2$ dans le sens trigonométrique. Par le théorème \ref{23}, $W_{k,l,n}$ est le nombre de $3d$-dissections ouvertes maximales de $\mathcal{P}$ pour lesquels le sommet 1 est utilisé par $k$ sous-polygones et le sommet $n+2$ est utilisé par $l$ sous-polygones.
\\
\\On commence par chercher une formule pour $W_{1,1}$. Pour construire une $3d$-dissection ouverte maximale de $\mathcal{P}$, on commence par choisir le sommet du sous-polygone de base dont le numéro $2 \leq i \leq n+1$ est le plus petit, ce qui permet de construire le côté de sommets $1$ et $i$. Puis, on choisit le sommet du sous-polygone de base dont le numéro $i \leq j \leq n+1$ est le plus grand, ce qui permet de construire le côté de sommets $j$ et $n+2$ (notons que l'on peut avoir $i=j$ si le sous-polygone de base est un triangle).

$$
\shorthandoff{; :!?}
\xymatrix @!0 @R=0.60cm @C=0.6cm
{
&&1\ar@{-}[lldd]\ar@{-}[dddddd] \ar@{-}[rr]&&n+2\ar@{-}[rrdd]\ar@{-}[rrdddd]&
\\
&&&
\\
2\ar@{-}[dd]&&&&&& n+1 \ar@{.}[dd]
\\
&&&&
\\
3&&&&&& j
\\
&&&
\\
&&i\ar@{.}[rr] \ar@{.}[lluu] &&j-1 \ar@{-}[rruu]
} 
$$

\noindent Pour avoir une $3d$-dissection ouverte maximale sur $\mathcal{P}$, on doit construire une $3d$-dissection ouverte maximale à base ouverte pour le polygone $1,\ldots,i$, une $3d$-dissection ouverte maximale à base ouverte pour le polygone $j,\ldots,n+2$ et une $3d$-dissection ouverte maximale dont le premier et le dernier sommet ne sont utilisés que par un seul sous-polygone pour le polygone $1,i,\ldots,j,n+2$. Cela donne $P_{i-2}W_{1,1,j-i+1}P_{n+1-j}$ constructions possibles. Puisque $i$ peut varier de 2 à $n+1$ et $j$ de $i$ à $n+1$, on a 
\[\sum_{i=2}^{n+1} \sum_{j=i}^{n+1} P_{i-2}W_{1,1,j-i+1}P_{n+1-j}=\sum_{i=0}^{n-1} \sum_{j=i+2}^{n+1} P_{i}W_{1,1,j-i-1}P_{n+1-j}\] \noindent possibilités. En faisant le changement de variable $k:=j-i-1$ dans la deuxième somme, on obtient $\sum_{i=0}^{n-1} P_{i}\left(\sum_{k=1}^{n-i} W_{1,1,k}P_{(n-i)-k}\right)$ possibilités. Comme $W_{1,1,0}=0$, on peut rajouter pour tout $0 \leq i \leq n-1$, $P_{i} W_{1,1,0}P_{n-i}$ et $P_{n} W_{1,1,0}P_{0}$. Par conséquent, il y a 
\[\sum_{i=0}^{n} P_{i}\left(\sum_{k=0}^{n-i} W_{1,1,k}P_{(n-i)-k}\right)\]
\noindent façons de construire une $3d$-dissection ouverte maximale de $P$.
\\
\\On a également $Q_{0}-1=P_{0}W_{1,1,0}P_{0}$ et on reconnaît ici la formule du produit de Cauchy des trois séries $P$, $W_{1,1}$ et $P$. Plus précisément, on a $Q-1=P~W_{1,1}~P$. Comme $P_{0}=1$, $P$ est inversible dans $\mathbb{Z}[[X]]$. Par conséquent, $W_{1,1}=P^{-1}(Q-1)P^{-1}$.
\\
\\Par le théorème \ref{25}, $Q(X)=1+X~P(X)^{2}+X^{4}P(X)^{5}+X^{7}P(X)^{8}+\ldots$. Donc, 
\[W_{1,1}(X)=X+X^{4}P(X)^{3}+X^{7}P(X)^{6}+\ldots=\sum_{j=0}^{+\infty} X^{3j+1}P(X)^{3j}.\]
\noindent Donc, on a $W_{1,1,1}=1$ et, par la proposition \ref{26}, on a, pour $n \geq 2$ :
\begin{eqnarray*}
W_{1,1,n} &=& \sum_{j=0}^{E\left[\frac{n-1}{3}\right]} [X^{n-3j-1}]P(X)^{3j} \\
          &=& \sum_{j=1}^{E\left[\frac{n-1}{3}\right]} \sum_{k=0}^{E\left[\frac{n-3j-1}{3}\right]} \frac{3j}{n-1-k} {n-3j-2k-2 \choose k}{2n-3j-4k-3 \choose n-3j-3k-1}.
\end{eqnarray*}

\noindent On s'intéresse maintenant à $W_{1,k}$ pour $k \geq 2$. Pour avoir une $3d$-dissection ouverte maximale de $\mathcal{P}$ dont le premier sommet est utilisé par un seul sous-polygone et dont le dernier sommet est utilisé par $k$ sous-polygones, on commence par choisir le sommet du sous-polygone de base dont le numéro $2 \leq i \leq n-k+2$ est le plus grand, ce qui permet de construire le côté $i,n+2$. Ensuite, on doit construire une $3d$-dissection ouverte maximale à base ouverte pour le polygone $i,i+1,\ldots,n+1,n+2$ dont le dernier sommet est utilisé par $k-1$ sous-polygones et une $3d$-dissection ouverte maximale pour $1,2,\ldots,i,n+2$ dont le premier et le dernier sommet ne sont utilisés que par un seul sous-polygone. On a ainsi $W_{1,1,i-1}U_{k-1,n-i+1}$ constructions possibles. Cela donne $\sum_{i=2}^{n-k+2} W_{1,1,i-1}U_{k-1,n-i+1}=\sum_{j=1}^{n-k+1} W_{1,1,j}U_{k-1,n-j}$ possibilités. Comme $U_{k-1,n-j}=0$ pour $j \geq n-k+2$ et $W_{1,1,0}=0$, on a :
\[W_{1,k,n}=\sum_{j=0}^{n} W_{1,1,j}U_{k-1,n-j}.\]

\noindent Ainsi, par le corollaire \ref{27}, $W_{1,k}=W_{1,1}U_{k-1}=W_{1,1}(1-P^{-1})^{k-1}$.
\\
\\iv) Soient $n \geq 1$, $(a_{1},\ldots,a_{n+2}) \in (\mathbb{N}^{*})^{n+2}$ et $\epsilon \in \{-1,1\}$. On a 
\begin{eqnarray*}
M_{n+2}(a_{1},\ldots,a_{n+2})=\epsilon TSTS & \Longleftrightarrow & TSM_{n+2}(a_{1},\ldots,a_{n+2})=\epsilon (TS)^{3} \\
                                            & \Longleftrightarrow & M_{n+3}(a_{1},\ldots,a_{n+2},1)=-\epsilon Id. \\
\end{eqnarray*}

\noindent Par conséquent, l'ensemble des solutions de taille $n+2$ de $(E_{TSTS})$ est en bijection avec l'ensemble des solutions de taille $n+3$ de $(E_{Id})$ dont la dernière composante est égale à 1. Ainsi, $x_{n}=V_{1,n+1}$.
\\
\\v) Soient $n \geq 1$, $(a_{1},\ldots,a_{n+2}) \in (\mathbb{N}^{*})^{n+2}$ et $\epsilon \in \{-1,1\}$. On a 
\begin{eqnarray*}
M_{n+2}(a_{1},\ldots,a_{n+2})=\epsilon STST & \Longleftrightarrow & M_{n+1}(a_{2},\ldots,a_{n+2})T^{a_{1}}S(ST)=\epsilon (ST)^{3} \\
                                            & \Longleftrightarrow & -M_{n+1}(a_{2},\ldots,a_{n+2})T^{a_{1}+1}=-\epsilon Id \\
																						& \Longleftrightarrow & T^{0}SM_{n+1}(a_{2},\ldots,a_{n+2})T^{a_{1}+1}S=\epsilon S^{2} \\
																						& \Longleftrightarrow & M_{n+3}(a_{1}+1,a_{2},\ldots,a_{n+2},0)=-\epsilon Id \\
																						& \Longleftrightarrow & M_{n+3}(a_{2},\ldots,a_{n+2},0,a_{1}+1)=-\epsilon Id \\
																						& \Longleftrightarrow & M_{n+1}(a_{2},\ldots,a_{n+1},a_{n+2}+a_{1}+1)=\epsilon Id. \\
\end{eqnarray*}

\noindent En particulier, si $(a_{1},\ldots,a_{n+2})$ est une solution de $(E_{STST})$ alors $(a_{2},\ldots,a_{n+1},a_{n+2}+a_{1}+1)$ est une solution de $(E_{Id})$ dont la dernière composante est supérieure à 3. Pour $n \leq 3$, il n'y a pas de $\lambda$-quiddité de taille $n+1$ dont la dernière composante est supérieure à 3. Donc, $y_{1}=y_{2}=y_{3}=0$.
\\
\\Notons, pour $n \geq 4$, $3 \leq d \leq n-1$ et $1 \leq k \leq d-2$, $\tilde{X}_{n}$ l'ensemble des solutions de $(E_{STST})$ de taille $n+2$, $\tilde{Y}_{n,d}:=\{(a_{1},\ldots,a_{n+2}) \in \tilde{X}_{n},~a_{1}+a_{n+2}=d-1\}$ et $\tilde{Z}_{n,d,k}$ le sous-ensemble de $\tilde{Y}_{n,d}$ suivant : $\{(a_{1},\ldots,a_{n+2}) \in \tilde{Y}_{n,d},~a_{1}=k~{\rm et}~a_{n+2}=d-k-1\}$. En reprenant les éléments développés dans la preuve du théorème \ref{12}, on a $\tilde{X}_{n}= \bigsqcup_{d=3}^{n-1} \tilde{Y}_{n,d}=\bigsqcup_{d=3}^{n-1} \bigsqcup_{k=1}^{d-2 }\tilde{Z}_{n,d,k}$. De plus, $\tilde{Z}_{n,d,k}$ est en bijection avec l'ensemble des $\lambda$-quiddités de taille $n+1$ dont la dernière composante est $d$. Par conséquent, ${\rm card}(\tilde{Z}_{n,d,k})=V_{d,n-1}$ et 
\[y_{n}=\sum_{d=3}^{n-1} \sum_{k=1}^{d-2} V_{d,n-1}=\sum_{d=3}^{n-1} (d-2) V_{d,n-1}.\]

\end{proof}

Bien que l'on n'ait pas eu besoin de toutes les valeurs de $W_{k,l}$ pour effectuer les dénombrements souhaités, on va conclure ce texte en démontrant une formule générique permettant d'obtenir toutes les valeurs des coefficients de cette série génératrice.

\begin{proposition}
\label{35}

Soient $n,k,l \geq 1$ et $W_{k,l,n}$ le nombre de $\lambda$-quiddités de taille $n+2$ dont la première composante vaut $k$ et la dernière composante est égale à $l$. On pose $W_{k,l,0}:=0$ et $W_{k,l}:=\sum_{n=0}^{+\infty} W_{k,l,n} X^{n}$. On a :
\[W_{k,l}=W_{1,1}(1-P^{-1})^{k-l-2}=W_{1,k+l-1}.\]

\end{proposition}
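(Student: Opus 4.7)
Le plan consiste à étendre la décomposition combinatoire utilisée dans la preuve du théorème~\ref{34}~iii) (où l'on a établi $Q - 1 = P\, W_{1,1}\, P$) au cas où les deux extrémités d'une $\lambda$-quiddité portent des contraintes d'utilisation imposées. Notons au passage que la formule énoncée semble contenir une coquille : l'exposant de $(1 - P^{-1})$ devrait être $k + l - 2$ et non $k - l - 2$, puisque le théorème~\ref{34}~iii) donne déjà $W_{1, k+l-1} = W_{1,1}(1 - P^{-1})^{k+l-2}$, ce qui impose cet exposant pour que l'identité $W_{k,l} = W_{1, k+l-1}$ ait un sens.

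Par le théorème~\ref{23}, $W_{k,l,n}$ compte les $3d$-dissections ouvertes maximales d'un polygone convexe à $n+2$ sommets numérotés $1, \ldots, n+2$ pour lesquelles le sommet $1$ est utilisé par $k$ sous-polygones et le sommet $n+2$ par $l$. À chaque telle dissection j'associerais le plus petit sommet $i \in \{2, \ldots, n+1\}$ du sous-polygone de base $P_b$ distinct de $1$ et le plus grand sommet $j \in \{i, \ldots, n+1\}$ de $P_b$ distinct de $n+2$ (avec $i = j$ possible lorsque $P_b$ est un triangle). Comme dans la construction de $W_{1,1}$, la dissection se décompose alors de manière unique en trois morceaux : une $3d$-dissection ouverte maximale à base ouverte sur $\{1, 2, \ldots, i\}$ dont le premier sommet est utilisé par exactement $k-1$ sous-polygones (la $k$-ième occurrence du sommet $1$ étant dans $P_b$) ; une $3d$-dissection ouverte maximale à base ouverte sur $\{j, j+1, \ldots, n+2\}$ dont le dernier sommet est utilisé par $l-1$ sous-polygones ; et une $3d$-dissection ouverte maximale sur le polygone intermédiaire $\{1, i, i+1, \ldots, j, n+2\}$ dont le premier et le dernier sommet ne sont utilisés que par un seul sous-polygone, comptée par $W_{1,1, j-i+1}$.

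Par la symétrie par retournement de la condition ouverte maximale à base ouverte (le retournement d'orientation échange les indices $1$ et $2$ dans $\mathbb{Z}/3\mathbb{Z}$, ce qui préserve les contraintes de divisibilité définissant la condition), le nombre de $3d$-dissections ouvertes maximales à base ouverte d'un polygone à $i$ sommets dont le \emph{premier} sommet est utilisé par $k-1$ sous-polygones est égal au nombre de celles dont le \emph{dernier} sommet est utilisé par $k-1$, à savoir $U_{k-1, i-2}$. En sommant sur $i$ et $j$ et en prolongeant les sommes par zéro via les conventions $U_{m,0} = 0$ et $W_{1,1,0} = 0$, la double somme se réduit à un triple produit de Cauchy :
\[
W_{k,l} \;=\; U_{k-1}\, W_{1,1}\, U_{l-1} \;=\; W_{1,1}(1 - P^{-1})^{k+l-2},
\]
où la seconde égalité utilise $U_m = (1 - P^{-1})^m$ issu de la preuve du théorème~\ref{11}. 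En comparant avec $W_{1, k+l-1} = W_{1,1}(1 - P^{-1})^{k+l-2}$ obtenu dans le théorème~\ref{34}~iii), on obtient $W_{k,l} = W_{1, k+l-1}$.

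Le principal point délicat que je prévois est la justification rigoureuse de l'invariance par retournement des $3d$-dissections ouvertes maximales à base ouverte, qui permet d'identifier le décompte du morceau gauche à $U_{k-1, i-2}$ (et symétriquement pour le morceau droit). Une fois ce point acquis, tout se ramène à l'identité de convolution ci-dessus et à une réécriture formelle directe. Les cas limites $k = 1$ ou $l = 1$ s'intègrent naturellement à la formule avec la convention $U_0 = 1$ (correspondant à la « dissection vide » d'un polygone à deux sommets) et redonnent l'identité déjà connue $W_{1,l} = W_{1,1}(1 - P^{-1})^{l-1}$ prouvée dans le théorème~\ref{34}~iii).
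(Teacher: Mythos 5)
Votre démonstration suit essentiellement la même démarche que celle de l'article : même décomposition en trois morceaux via le plus petit sommet $i$ et le plus grand sommet $j$ du sous-polygone de base, même argument de symétrie par retournement pour identifier le morceau gauche à $U_{k-1,i-2}$, et même triple produit de Cauchy $W_{k,l}=U_{k-1}\,W_{1,1}\,U_{l-1}$. Votre remarque sur la coquille est juste : la preuve de l'article aboutit bien à l'exposant $k+l-2$, et non $k-l-2$ comme écrit dans l'énoncé.
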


\begin{proof}

Soient $n,k,l \geq 1$. Si $k=1$ ou $l=1$ alors le résultat est vrai (voir la preuve du théorème \ref{34} iii)). On suppose maintenant $k,l \geq 2$ et on considère $\mathcal{P}$ un polygone convexe à $n+2$ sommets, ces derniers étant numérotés de 1 à $n+2$ dans le sens trigonométrique. Par le théorème \ref{23}, $W_{k,l,n}$ est le nombre de $3d$-dissections ouvertes maximales de $\mathcal{P}$ pour lesquels le sommet 1 est utilisé par $k$ sous-polygones et le sommet $n+2$ est utilisé par $l$ sous-polygones.
\\
\\Pour avoir une $3d$-dissection ouverte maximale de $\mathcal{P}$ dont le premier sommet est utilisé par $k$ sous-polygones et dont le dernier sommet est utilisé par $l$ sous-polygones, on commence par choisir le sommet du sous-polygone de base dont le numéro $k+1 \leq i \leq n+1$ est le plus petit, ce qui permet de construire le côté $1,i$. Ensuite, on choisit le sommet du sous-polygone de base dont le numéro $i \leq j \leq n-l+2$ est le plus grand, ce qui permet de construire le côté $j,n+2$ (si $k+1>n-l+2$ alors $W_{k,l,n}=0$). Pour terminer, on doit construire une $3d$-dissection ouverte maximale à base ouverte pour le polygone $1,\ldots,i$ dont le premier sommet est utilisé par $k-1$ sous-polygones, une $3d$-dissection ouverte maximale pour le polygone $1,i,\ldots,j,n+2$ dont le premier et le dernier sommet ne sont utilisés que par un seul sous-polygone et une $3d$-dissection ouverte maximale à base ouverte pour le polygone $j,\ldots,n+2$ dont le dernier sommet est utilisé par $l-1$ sous-polygones. Cela donne $U_{k-1,i-2}W_{1,1,j-i+1}U_{l-1,n-j+1}$ constructions possibles (car il y a autant de $3d$-dissections ouvertes maximales à base ouverte d'un polygone à $i$ sommets dont le premier sommet est utilisé par $k-1$ sous-polygones que de $3d$-dissections ouvertes maximales à base ouverte d'un polygone à $i$ sommets dont le dernier sommet est utilisé par $k-1$ sous-polygones). Ainsi, $W_{k,l,n}=\sum_{i=k+1}^{n+1} \sum_{j=i}^{n-l+2} U_{k-1,i-2}W_{1,1,j-i+1}U_{l-1,n-j+1}=\sum_{i=k-1}^{n-1} \sum_{j=i}^{n-l} U_{k-1,i}W_{1,1,j-i+1}U_{l-1,n-j-1}$. Comme $U_{k-1,h}=0$ pour $0 \leq h \leq k-2$, $U_{l-1,n-h-1}=0$ pour $h \geq n-l+1$ et $W_{1,1,0}=0$, on a :
\[W_{k,l,n}=\sum_{i=0}^{n-1} \sum_{j=i-1}^{n-1} U_{k-1,i}W_{1,1,j-i+1}U_{l-1,n-j-1}=\sum_{i=0}^{n} U_{k-1,i}\left(\sum_{s=0}^{n-i} W_{1,1,s}U_{l-1,(n-i)-s}\right).\]
 \noindent De plus, $W_{k,l,0}=U_{k-1,0}W_{1,1,0}U_{l-1,0}$. Par conséquent, on a :
\[W_{k,l}=U_{k-1}W_{1,1}U_{l-1}.\]
\noindent Par le corollaire \ref{27}, on obtient :
\[W_{k,l}=(1-P^{-1})^{k-1}W_{1,1}(1-P^{-1})^{l-1}=W_{1,1}(1-P^{-1})^{k+l-2}=W_{1,k+l-1}.\]

\end{proof}

\begin{examples}
{\rm On a $W_{2,2,6}=17$, $W_{2,3,8}=114$ et $W_{5,3,11}=492$.
}
\end{examples}

\begin{remark}
{\rm L'existence de l'égalité $W_{k,l}=W_{1,k+l-1}$ laisse envisager la possibilité d'avoir des bijections intéressantes au sein de l'ensemble des $\lambda$-quiddités.
}
\end{remark}

\subsection{Applications numériques}

Le tableau ci-dessous donne les valeurs de $W_{1,k,n}$ pour les petites valeurs de $k$ et $n$.

\hfill\break

\begin{center}
\begin{tabular}{|c|c|c|c|c|c|c|c|c|c|c|c|c|c|}
\hline
  \multicolumn{1}{|c|}{\backslashbox{$k$}{\vrule width 0pt height 1.25em$n$}} & 0 & 1 & 2 & 3 & 4 & 5 & 6 &7 & 8 & 9 & 10 & 11 & 12 \\
	\hline
  1   & 0 & 1 & 0 & 0 & 1 & 3 & 9 & 29 & 99 & 348 & 1247 & 4539 & 16~740     \rule[-7pt]{0pt}{18pt} \\
	\hline
  2   & 0 & 0 & 1 & 1 & 2 & 7 & 22 & 71 & 239 & 830 & 2948 & 10~655 & 39~063     \rule[-7pt]{0pt}{18pt} \\
	\hline
  3   & 0 & 0 & 0 & 1 & 2 & 5 & 17 & 57 & 194 & 678 & 2420 & 8781 & 32~292     \rule[-7pt]{0pt}{18pt} \\
	\hline
  4   & 0 & 0 & 0 & 0 & 1 & 3 & 9 & 32 & 114 & 408 & 1481 & 5445 & 20~235     \rule[-7pt]{0pt}{18pt} \\
	\hline
  5   & 0 & 0 & 0 & 0 & 0 & 1 & 4 & 14 & 53 & 200 & 751 & 2831 & 10~730     \rule[-7pt]{0pt}{18pt} \\
\hline
  6   & 0 & 0 & 0 & 0 & 0 & 0 & 1 & 5 & 20 & 81 & 323 & 1270 & 4969     \rule[-7pt]{0pt}{18pt} \\
\hline
  7   & 0 & 0 & 0 & 0 & 0 & 0 & 0 & 1 & 6 & 27 & 117 & 492 & 2022     \rule[-7pt]{0pt}{18pt} \\
\hline
  8   & 0 & 0 & 0 & 0 & 0 & 0 & 0 & 0 & 1 & 7 & 35 & 162 & 717     \rule[-7pt]{0pt}{18pt} \\
\hline
  9   & 0 & 0 & 0 & 0 & 0 & 0 & 0 & 0 & 0 & 1 & 8 & 44 & 217     \rule[-7pt]{0pt}{18pt} \\
\hline
  10   & 0 & 0 & 0 & 0 & 0 & 0 & 0 & 0 & 0 & 0 & 1 & 9 & 54     \rule[-7pt]{0pt}{18pt} \\
\hline
  11   & 0 & 0 & 0 & 0 & 0 & 0 & 0 & 0 & 0 & 0 & 0 & 1 & 10     \rule[-7pt]{0pt}{18pt} \\
\hline
  12   & 0 & 0 & 0 & 0 & 0 & 0 & 0 & 0 & 0 & 0 & 0 & 0 & 1     \rule[-7pt]{0pt}{18pt} \\
\hline
	
\end{tabular}
\end{center}

\hfill\break

Notons qu'en sommant, pour un $n$ fixé, les valeurs de $W_{1,k,n}$ présentes dans la colonne indexée par $n$ on retrouve la valeur de $V_{1,n}$.

\hfill\break

\begin{center}
\begin{tabular}{|c|c|c|c|c|c|c|c|c|c|c|c|c|}
\hline
  n & 1 & 2 & 3 & 4 & 5 & 6 & 7 & 8 & 9 & 10 & 11 & 12 \\
	\hline
  $\mathcal{S}_{n}$ & 0 & 0 & 1 & 4 & 14 & 50 & 182 & 670 & 2489 & 9326 & 35~219 & 133~940      \rule[-7pt]{0pt}{18pt} \\
	\hline
	$\mathcal{T}_{n}$ & 1 & 2 & 5 & 16 & 53 & 180 & 627 & 2232 & 8084 & 29~690 & 110~310 & 413~870      \rule[-7pt]{0pt}{18pt} \\
	\hline
	$u_{n}$ & 0 & 1 & 3 & 9 & 30 & 104 & 368 & 1324 & 4834 & 17~870 & 66~755 & 251~601    \rule[-7pt]{0pt}{18pt} \\
	\hline
	$v_{n}$ & 0 & 1 & 3 & 9 & 29 & 99 & 348 & 1247 & 4539 & 16~740 & 62~420 & 234~924    \rule[-7pt]{0pt}{18pt} \\
	\hline
	$w_{n}$ & 0 & 0 & 1 & 4 & 14 & 51 & 188 & 697 & 2602 & 9786 & 37~065 & 141~291     \rule[-7pt]{0pt}{18pt} \\
	\hline
	$x_{n}$ & 1 & 2 & 6 & 19 & 62 & 209 & 726 & 2580 & 9331 & 34~229 & 127~050 & 476~290    \rule[-7pt]{0pt}{18pt} \\
	\hline
	$y_{n}$ & 0 & 0 & 0 & 1 & 5 & 20 & 78 & 302 & 1165 & 4492 & 17~349 & 67~185     \rule[-7pt]{0pt}{18pt} \\
	\hline
	\end{tabular}
\end{center}

\hfill\break

\end{document}